\documentclass{amsart}
\usepackage{graphicx,amsmath,amssymb,amsfonts,amsthm}
\usepackage[all,cmtip]{xy}
\usepackage[colorlinks=false,hyperindex]{hyperref}
\usepackage{color}
\usepackage{fullpage}
\usepackage{tikz}
\usepackage{tikz-cd}
\usepackage{url}

\newtheorem{theorem}{Theorem}[section]

\newtheorem{corollary}[theorem]{Corollary}
\newtheorem{proposition}[theorem]{Proposition}

\theoremstyle{definition}
\newtheorem{definition}[theorem]{Definition}

\newtheorem{conjecture}[theorem]{Conjecture}
\newtheorem{example}[theorem]{Example}

\theoremstyle{remark}
\newtheorem{remark}[theorem]{Remark}

\newcommand{\ZZ}{\mathbb{Z}}

\newcommand{\CC}{\mathbb{C}}
\newcommand{\RR}{\mathbb{R}}

\newcommand{\Krull}{\operatorname{dim}}
\newcommand{\trdeg}{\operatorname{trdeg}}
\newcommand{\ord}{\operatorname{ord}}

\renewcommand{\AA}{\mathbb{A}}

\newcommand{\Spec}{\operatorname{Spec}}

\newcommand{\Ocal}{\mathcal{O}}

\newcommand{\Frac}{\operatorname{Frac}}

\newcommand{\rk}{\operatorname{rk}}

\newcommand{\sat}{\operatorname{sat}}

\newcommand{\Max}{\operatorname{max}}
\newcommand{\red}{\operatorname{red}}
\newcommand{\Sym}{\operatorname{Sym}}

\newcommand{\len}{\operatorname{len}}

\newcommand{\taylor}[1]{{\color{blue} \sf $\clubsuit\clubsuit\clubsuit$ Taylor: [#1]}}

\newcommand{\defi}[1]{\textsf{#1}} 				

\begin{document}

\title{The Jacobi Bound Conjecture for Generically Reduced Differential Schemes}
\author{Taylor Dupuy and David Zureick-Brown}
\date{\today}
\maketitle
\begin{abstract}
 We prove the Strong Jacobi Bound Conjecture for generically reduced components of differential schemes.
\end{abstract}
\tableofcontents

\section{Introduction}

Since the Jacobi Bound Conjecture is analogous to B\'ezout's inequality for affine space we first recall this theorem. 
In what follows $\len_K$ denotes the length of an affine scheme which is the length of its underlying coordinate ring. 
\begin{theorem}
 Let $K$ be a field.
 Let $I =\langle f_1,\ldots,f_n\rangle \subset K[x_1,\ldots,x_n]$ be an ideal with $f_i$ of degree $d_i$ and let $X = \Spec K[x_1,\ldots,x_n]/I$.
 If $X$ has finite length, then $\len_K(X)\leq d_1d_2\cdots d_n$. 
\end{theorem}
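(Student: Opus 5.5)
Pass to projective space and apply Bézout to a complete intersection. First, length is invariant under base change to an algebraic closure, since $\len_{\bar K}(A\otimes_K \bar K)=\dim_{\bar K}(A\otimes_K\bar K)=\dim_K A=\len_K(A)$ for a finite $K$-algebra $A$. So we may assume $K$ is algebraically closed, and in particular infinite. Let $F_i(x_0,\ldots,x_n)=x_0^{d_i}f_i(x/x_0)$ be the homogenizations and $Z=V(F_1,\ldots,F_n)\subset \mathbb{P}^n$. Then $Z\cap\{x_0\neq 0\}=X$, and $\len_K(X)=\sum_{p\in X}\len \Ocal_{X,p}$.

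The difficulty is that $Z$ may have positive-dimensional components at infinity, so $Z$ need not be a complete intersection. I would handle this by perturbation. For generic forms $G_i$ of degree $d_i$ and a parameter $t$, consider $F_i+tG_i$. For $t$ generic these cut out a zero-dimensional complete intersection $Z_t$ with no points at infinity, and Bézout gives $\len(Z_t)=d_1\cdots d_n$. One can see this either from the Hilbert polynomial of a regular sequence, via the Koszul complex, or from the fact that $\dim_K K[x]/(f+tg)$ equals the top-degree product when the leading forms have no common projective zero. The key step is \emph{upper semicontinuity in families}, which I would set up as follows. Let $\mathcal{Z}\subset \mathbb{P}^n\times\AA^1_t$ be the family, and let $\mathcal{Z}'$ be the union of the components of $\mathcal{Z}$ dominating $\AA^1$. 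Then $\mathcal{Z}'$ is flat and finite over a neighborhood of $t=0$ near each isolated point $p\in X$. Since $X$ has finite length, each point of $X$ is an isolated point of $Z$, so the local structure near $p$ is quasi-finite. Conservation of number then gives $\len\Ocal_{Z,p}\le$ (the sum of the multiplicities of the points of $Z_t$ specializing to $p$). This is the standard inequality: the fiber of a quasi-finite map at an isolated point has length at least the degree of the flat closure. Summing over $p\in X$, and noting that distinct $p$ receive disjoint sets of specializing points, gives $\len_K X\le \len Z_t=d_1\cdots d_n$.

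Alternatively, I would present the argument more algebraically. Pick a degree bound and compare $\dim_K A$ with the Hilbert function. Since $X$ is finite, $A=K[x]/I$ and $\dim_K A$ equals $\dim_K K[x]_{\le N}/(I\cap K[x]_{\le N})$ for $N\gg0$. The leading-form ideal satisfies $\mathrm{in}(I)\supseteq(\text{top forms } f_i^{top})$, but that inclusion alone gives no bound when the top forms share zeros. So the perturbation route seems necessary.

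The main obstacle is the local conservation-of-number step. One needs that the length of an isolated point of the special fiber of a family defined by $n$ equations in $n$ variables over a DVR is at least the length of the generic-fiber points specializing to it. I would prove this by localizing $K[t]_{(t)}[x]/(f_i+tg_i)$ at $p$. By the local criterion this ring is Cohen–Macaulay, being a complete intersection of dimension 1. Hence it is torsion-free over $K[t]_{(t)}$ and, after henselization, finite free, with rank equal to its special-fiber length $\len\Ocal_{X,p}$. This yields equality locally, and the global bound $d_1\cdots d_n$ then comes from Bézout for the generic member.
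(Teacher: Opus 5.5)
The paper gives no proof of this statement. It is recalled in the introduction as the classical B\'ezout inequality, only to set up the analogy with the Jacobi bound, so there is nothing of the authors' to compare your argument with.

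Your route is a standard and correct proof: deform to $F_i+tG_i$, whose generic member is a zero-dimensional complete intersection in $\mathbb{P}^n$ with no points at infinity and length $d_1\cdots d_n$, then use conservation of number at the points of $X$. However, your justification of the key local step points the wrong way. Write $m_p$ for the length at $p$ of the special fiber of the flat closure $\mathcal{Z}'$. What quasi-finiteness alone gives (``the fiber at an isolated point has length at least the degree of the flat closure'') is $\len\Ocal_{X,p}\ge m_p$. This is also what you say ``one needs'' in your last paragraph, but that inequality is automatic, because $\mathcal{Z}'_0\subseteq Z$, and it is useless for an upper bound.

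What is needed is $\len\Ocal_{X,p}\le m_p$. It holds because the \emph{unmodified} family $\mathcal{Z}$ has no $t$-torsion at $(p,0)$. Since $\Ocal_{X,p}$ is Artinian, the ideal $(t,f_1+tg_1,\ldots,f_n+tg_n)=(t,f_1,\ldots,f_n)$ is primary to the maximal ideal of the regular local ring of $\AA^{n+1}$ at $(p,0)$. So these $n+1$ elements form a regular sequence in any order, $t$ is a nonzerodivisor on $\Ocal_{\mathcal{Z},(p,0)}$, and $\len\Ocal_{X,p}=m_p$. Your Cohen--Macaulay paragraph does establish exactly this, so the proof survives. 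But it is this equality, not semicontinuity, that carries the argument.

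The bookkeeping is also simpler without henselization. Take $\mathcal{Z}'$ to be the scheme-theoretic closure of the generic fiber, i.e.\ $\mathcal{Z}$ modulo $t$-power torsion; the reduced union of the dominating components would lose multiplicities. It is closed in $\mathbb{P}^n\times\AA^1$ and every component is a curve dominating $\AA^1$. Hence it is finite and flat over $\AA^1$ of degree $d_1\cdots d_n$, and its special fiber $\mathcal{Z}'_0\subseteq Z$ has length $d_1\cdots d_n$. By the torsion-freeness above, $\mathcal{Z}'=\mathcal{Z}$ near each $(p,0)$ with $p\in X$, so $\Ocal_{\mathcal{Z}'_0,p}=\Ocal_{X,p}$. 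Therefore $\len_K X=\sum_{p\in X}\len\Ocal_{\mathcal{Z}'_0,p}\le\len\mathcal{Z}'_0=d_1\cdots d_n$, and the deficit is the part of $\mathcal{Z}'_0$ at infinity.

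Two ingredients do the work here. One is Cohen--Macaulayness of the local rings at isolated points, which absorbs all non-reducedness; the other is properness of the compactified family. Neither has an evident counterpart for the non-finite-type rings $K\lbrace x_1,\ldots,x_n\rbrace/[u_1,\ldots,u_n]$ (compare Example~\ref{example:really-big-point}). The paper instead avoids non-reducedness altogether by assuming generic reducedness.
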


Jacobi in 1865 \cite{Jacobi1865,Jacobi2009} stated an analog for solutions of algebraic differential equations.
The main trade-off is that `finite' in B\'ezout is replaced by `finite dimensional' in the Jacobi Bound Conjecture and the length is replaced by the dimension of the intersection. 
Later, Ritt in 1935 \cite{Ritt1935} realized that Jacobi's statement had no rigorous definition of ``dimension'', understood by Jacobi to be the number of parameters required to parametrize solutions of differential equations.\footnote{Jacobi himself used the word ``order'' for this undefined notion.}
Ritt remedied this by providing a definition based on transcendence degrees, declared it an open problem, and proved several cases of the conjecture.
The general case of this statement has been open since Ritt's original paper.
\\

In the present paper we prove the Strong Jacobi Bound Conjecture (Conjecture~\ref{C:jbc}) for all components which are generically reduced. See Section \ref{section:notation} for notation.

\begin{theorem}\label{T:main}
Let $(K,\partial)$ be a differential field of characteristic zero.
 Let $I = [u_1,\ldots,u_n] \subseteq K\lbrace x_1,\ldots,x_n\rbrace$ be a differential ideal, and let $\Sigma = \Spec K\lbrace x_1,\ldots,x_n\rbrace/I$. Let $\Sigma_1$ be an irreducible component of $\Sigma$ such that $\Sigma_1$ is generically reduced and has finite dimension. Then $\Krull(\Sigma_1)\leq J(u_1,\ldots,u_n)$. 
\end{theorem}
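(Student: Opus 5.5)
We reduce the statement to a bound on the rank of a linear system of differential operators, namely the linearization of the system $u_1,\ldots,u_n$ at the generic point of $\Sigma_1$, and then invoke the linear case of the Jacobi bound.

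Let $\eta$ be the generic point of $\Sigma_1$, and let $L=\kappa(\eta)$ be its residue field. This is a differential field extension of $K$ with $\trdeg_K L = \Krull(\Sigma_1)<\infty$. Because $\Sigma_1$ is generically reduced, the local ring of $\Sigma$ at $\eta$ is the field $L$ itself. Hence the module of differential K\"ahler differentials satisfies
\[
\Omega_{\Sigma/K}\otimes L = \Omega_{L/K},
\]
and this $L$-vector space has dimension $\trdeg_K L$ in characteristic zero. This is the only place reducedness enters; with embedded nilpotents the cotangent space at $\eta$ could be strictly larger.

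On the other hand, $\Omega_{K\{x\}/K}$ is the free $L[\partial]$-module (after base change) on $dx_1,\ldots,dx_n$. The conormal sequence
\[
I/I^2\otimes L \to \Omega_{K\{x\}/K}\otimes L \to \Omega_{\Sigma/K}\otimes L \to 0
\]
then presents $\Omega_{L/K}$ as the cokernel of the $L[\partial]$-linear map $L[\partial]^n\to L[\partial]^n$ given by the matrix of operators
\[
M_{ij}=\sum_k \frac{\partial u_i}{\partial x_j^{(k)}}(\eta)\,\partial^k .
\]
This works because $I$ is generated as a differential ideal by the $u_i$, so $du_i$ generate the image as an $L[\partial]$-module.

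So $\Krull(\Sigma_1)=\dim_L \operatorname{coker}(M)$. It remains to show $\dim_L\operatorname{coker}(M)\le J(u_1,\ldots,u_n)$. Since $\operatorname{ord}_{x_j}$ of the $(i,j)$ entry of $M$ is at most $\ord_{x_j}u_i$, the Jacobi number of $M$ is at most $J(u)$. Finiteness of $\dim_L\operatorname{coker}(M)$ forces $M$ to have full rank over the Ore field. We then apply the linear Jacobi bound for a square full-rank operator matrix: $\dim \operatorname{coker}(M)=\deg\operatorname{Dieudonné\ det}(M)\le \max_\sigma\sum_i \ord M_{i\sigma(i)}$. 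This is classical (Ritt's linear case, or via a Hermite/Jacobi normal form, i.e.\ Ollivier–Kondratieva). Monotonicity of the tropical determinant in its entries then finishes the argument.

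The main obstacle is the first step: checking carefully that differential K\"ahler differentials behave as expected. Specifically, $\Omega_{\Sigma/K}\otimes\kappa(\eta)$ must equal $\Omega_{L/K}$, which requires the local ring at $\eta$ to be a field, hence generic reducedness, despite $K\{x\}$ being non-Noetherian. The generators of $I/I^2\otimes L$ as an $L[\partial]$-module must also be exactly the $du_i$. The sign/indexing of orders for the linear Jacobi bound (using $\ord_{x_j}u_i$, with $-\infty$ entries) should also be matched with the paper's definition of $J$.
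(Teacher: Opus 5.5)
Your proof is correct and has the same skeleton as the paper's: identify $\Krull(\Sigma_1)$ with the dimension of the linearized system at the generic point $\eta$, apply the linear Jacobi bound over $\kappa(\eta)$, and compare orders entrywise as in Proposition~\ref{proposition:linearizing-jacobi-numbers}.

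The real difference is how you obtain $\Krull(\Sigma_1)=\dim_{\kappa(\eta)}(\Omega_{\Sigma/K}\otimes\kappa(\eta))$.

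\textbf{The paper's route.} It goes through Proposition~\ref{prop:finite-typeness}. A characteristic set of $P_1$ and Ritt division show that a localization of $K\lbrace x_1,\ldots,x_n\rbrace/P_1$ is of finite type. The paper then passes to an open $U\subseteq\Sigma_1$ on which $\Sigma_1$ is reduced and invokes generic smoothness for varieties (Proposition~\ref{proposition:local-dimension}).

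\textbf{Your route.} You stay at the generic point. You use $\Ocal_{\Sigma,\eta}=\kappa(\eta)$, the fact that K\"ahler differentials commute with localization, and $\dim_{\kappa(\eta)}\Omega_{\kappa(\eta)/K}=\trdeg_K\kappa(\eta)$ for any field extension in characteristic zero. The last fact follows from a transcendence basis together with separability of the remaining algebraic extension.

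\textbf{What each approach buys.} Your argument is shorter, needs no characteristic sets or finite-typeness, and uses only reducedness of the local ring at $\eta$. The paper's choice of an open $W$ on which $(\Sigma_1)_{\red}\to\Sigma_1$ is an isomorphism needs that reducedness to spread out to an open set. That is not automatic for non-Noetherian rings, so your version is also more robust on this point. In exchange, the paper's route gives extra geometry: an explicit smooth finite-type open subset of the component, and the picture of $T\Sigma\to\Sigma$ as a family of linearizations.

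\textbf{Smaller points.} Your conormal-sequence computation also proves \eqref{equation:linear-equations} directly: the $dg$ terms vanish because $u_i^{(k)}(\eta)=0$. Your cokernel $\kappa(\eta)[\partial]^{n}/\kappa(\eta)[\partial]^{n}M$ is just the dual description of the paper's $L[\Sigma,\eta]$ under $dx_i^{(j)}\mapsto y_i^{(j)}$.

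On the convention you flagged, either convention works. The inequality $\ord M_{ij}\le\ord_{x_j}u_i$ holds with the $-\infty$ convention and with the paper's $\Max^{+}$ convention. Moreover, the $\Max^{+}$ tropical determinant dominates the $-\infty$ one. So either form of the linear bound yields $\Krull(\Sigma_1)\le J(u_1,\ldots,u_n)$ as the paper defines it.
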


\begin{remark}
The number $J(u_1,\ldots,u_n)$ is the \defi{Jacobi number} or \defi{Jacobi bound} (Definition \ref{definition:jacobi-number}).
\end{remark}

\begin{remark}
If $I$ is radical (i.e.~$\sqrt{I} = I$), then every component of $\Sigma$ is reduced.
\end{remark}

The strategy is the following: if $\Sigma_1$ is generically reduced with generic point $\eta_1$, then $\Krull(T\Sigma_{\eta_1}) = \Krull(\Sigma_1)$ and the tangent space at $\eta_1$, $T\Sigma_{\eta_1}$, is defined by equations which are linearizations of the $u_1,\ldots,u_n$. 
The Jacobi bound of the linearizations is less than or equal to the original Jacobi bound. Since the Jacobi Bound Conjecture in the linear case is a Theorem (see \cite [pg 310]{Ritt1935} and \cite [Section 7]{Dupuy2026}), our result follows. We expand on these three sentences in Section~\ref{S:strategy}.

The contrapositive of the previous paragraph is also interesting; it says that whenever we have a bad linearization that we must be able to witness some non-reducedness. 
In particular, the remaining difficulty of the Jacobi Bound Conjecture arises precisely from non-reducedness. 
\begin{corollary}\label{C:contrapositive}
	Let $\Sigma \subset \AA^n$ be the $D$-scheme cut out by $u_1,\ldots,u_n \in K\lbrace x_1,\ldots,x_n\rbrace$.
	Let $\Sigma_1$ be an irreducible component of $\Sigma$ with $\dim(\Sigma_1)<\infty$. Let $\eta_1$ be the generic point of $\Sigma_1$. 
	If $\Krull((T\Sigma)_{\eta_1})> \Krull(\Sigma_1)$, then $\Sigma_1$ is not generically reduced. 
\end{corollary}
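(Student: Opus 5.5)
The plan is to argue by contrapositive, using the central fact behind the strategy of Theorem~\ref{T:main}: for an irreducible component $\Sigma_1$ of a $D$-scheme with generic point $\eta_1$, generic reducedness gives $\Krull((T\Sigma)_{\eta_1}) = \Krull(\Sigma_1)$. So assume $\Sigma_1$ is generically reduced and deduce $\Krull((T\Sigma)_{\eta_1}) \le \Krull(\Sigma_1)$. This contradicts the hypothesis $\Krull((T\Sigma)_{\eta_1}) > \Krull(\Sigma_1)$, so $\Sigma_1$ cannot be generically reduced.

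The key step is the tangent-space comparison. Let $\kappa(\eta_1)$ be the residue differential field at $\eta_1$. Then $(T\Sigma)_{\eta_1}$ is the differential linear space over $\kappa(\eta_1)$ cut out by the linearizations $du_i = \sum_{j,k} \frac{\partial u_i}{\partial x_j^{(k)}}(\eta_1)\, dx_j^{(k)}$. Equivalently, it is the dual of the differential module $\Omega_{\Ocal_{\Sigma,\eta_1}} \otimes \kappa(\eta_1)$, i.e.\ the cotangent data $\mathfrak{m}/\mathfrak{m}^2$ together with the Kähler differentials of $\kappa(\eta_1)$ over $K$.

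Generic reducedness means $\Ocal_{\Sigma,\eta_1}$ is a field, namely $\kappa(\eta_1)$. The local ring at $\eta_1$ is zero-dimensional, and being reduced it equals its residue field, so $\mathfrak{m}=0$. Hence $(T\Sigma)_{\eta_1}$ is the linear differential scheme dual to $\Omega_{\kappa(\eta_1)/K}$, with its induced $\partial$-module structure.

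In characteristic zero, $\Omega_{\kappa(\eta_1)/K}$ has $\kappa(\eta_1)$-dimension equal to $\trdeg_K \kappa(\eta_1)$. Moreover, the differential dimension of the dual linear $D$-scheme, which is its order as a $\partial$-module, equals this transcendence degree. By the Ritt-style definition, that transcendence degree is $\Krull(\Sigma_1)$. This gives the equality, and in particular the inequality needed.

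I expect the main obstacle to be the bookkeeping that matches the "dimension" of the tangent $D$-scheme, defined via transcendence degree or order of the linear differential system, with the $\kappa(\eta_1)$-vector-space dimension of $\Omega_{\kappa(\eta_1)/K}$. This step is where finiteness of $\Krull(\Sigma_1)$ is used: it makes the differential module finite dimensional, so order and dimension agree. The step I would try first is to write $\kappa(\eta_1)$ as a finitely generated differential field extension, $K\langle \bar x\rangle$ with a finite transcendence basis $t_1,\dots,t_m$ among the derivatives $\bar x_j^{(k)}$. The forms $dt_1,\dots,dt_m$ then give a $\kappa(\eta_1)$-basis of $\Omega$, which identifies the solution space of the linearized system with an $m$-dimensional space.
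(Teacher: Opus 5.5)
Your argument is correct, but it reaches the key equality $\Krull((T\Sigma)_{\eta_1})=\Krull(\Sigma_1)$ by a more direct route than the paper.

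\textbf{The paper's route.} The paper's proof of this corollary relies on Proposition~\ref{proposition:local-dimension}, which in turn rests on Proposition~\ref{prop:finite-typeness}. There, a characteristic set of $P_1$ and Ritt division show that a localization $S^{-1}A_1$ is finitely generated, so $\Sigma_1$ contains a nonempty open $U$ of finite type. Generic smoothness then gives a smooth open on which $\Omega^1$ is locally free of rank $\Krull(\Sigma_1)$. Finally, one returns to the fiber at $\eta_1$, and from $T\Sigma_1$ to $T\Sigma$, by compatibility with localization.

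\textbf{Your route.} You work directly at the stalk. Since $\eta_1$ corresponds to a minimal prime, $\Ocal_{\Sigma,\eta_1}$ has a unique prime, so reducedness forces it to be the field $\kappa(\eta_1)$. Since K\"ahler differentials commute with localization, the fiber is $\Spec\Sym_{\kappa(\eta_1)}(\Omega_{\kappa(\eta_1)/K})$. In characteristic zero, $\dim_{\kappa(\eta_1)}\Omega_{\kappa(\eta_1)/K}=\trdeg_K\kappa(\eta_1)$ holds for any field extension.

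\textbf{What each route buys.} Your computation is exactly the one underlying generic smoothness, done at the one point where no finite-type hypothesis is needed. So you avoid characteristic sets, Ritt division and the birational finite-typeness detour entirely. It also shows that $\Krull(\Sigma_1)<\infty$ plays no essential role. Your worry that finiteness is needed to make ``order and dimension agree'' is unfounded: $\Spec\Sym_{\kappa}(V)$ has dimension $\dim_\kappa V$ in all cases. The paper's route gives more than the corollary needs: a whole smooth finite-type open $U\subseteq\Sigma_1$ with $\Omega^1_U$ locally free, which fits its broader framework.

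\textbf{Terminology.} What you compute is the absolute (Krull) dimension, i.e.\ the order, of the linear $D$-scheme. Calling it its ``differential dimension'' is misleading, since the differential dimension of that scheme is $0$.
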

\begin{proof}
	Suppose it were reduced. 
	Then we know that the equations $u_1,\ldots,u_n$ defining $I_{\Sigma}=[u_1,\ldots,u_n]$ can be linearized at $\eta_1$ to give equations for $(T\Sigma_1)_{\eta_1}$. In other words $(T\Sigma_1)_{\eta_1} = (T\Sigma)_{\eta_1}$.
	This then implies $\Krull((T\Sigma_1)_{\eta_1})\neq \Krull(T\Sigma_{\eta_1})$, which is a contradiction.
\end{proof}
 
Some more remarks about the statement are in order. 

\begin{remark}
		The characteristic zero assumption seems to be essential for our proof as it is related to generic smoothness, which is closely related to Bertini's Theorem.
\end{remark}

\begin{remark}
There is a long history of partial progress toward the Jacobi Bound Conjecture over the last 100 years which is surveyed in \cite{Kondratrieva2008}.
In 1935 Ritt first declared Jacobi's proof incomplete and proved both the linear and two variable cases as well as giving a general bound $\Krull(\Sigma) \leq r_1 + \cdots + r_n$ where $r_i = \Max \lbrace \ord_{x_i}^{\partial}(u_j) \colon 1 \leq j \leq n \rbrace$ \cite{Ritt1935}.
In 1958, in his thesis, Greenspan \cite{Greenspan1958} improved upon Ritt's bound in the difference case (which could be transported to the differential case \cite[section 8.1]{Ollivier2009}).
See also the accompanying paper \cite{Greenspan1959}.
In 1969, Lando in her thesis studied the conjecture and its difference analog \cite{Lando1969}.
This work resulted in the papers \cite{Lando1970, Lando1972} in which she proved the first order cases of the ``strong conjecture'' as well as the analog of Ritt's statements in the difference case. 
In 1976 Tomasavic \cite{Tomasovic1976} introduced a version of the conjecture for partial differential equations and proved it in the order two and linear cases.
In 1980 Cohn gave the so-called Greenspan bound \cite{Cohn1980} for the Krull dimension.
In 2004 Hrushovski posted a manuscript on the arxiv proving the difference case using so-called uniform Lang--Weil estimates \cite{Hrushovski2004}.
In 2007 Ollivier and Sadik \cite{Ollivier2007} and then later Ollivier in 2022 \cite{Ollivier2022} proved the conjecture in the so-called quasi-regular case.
\end{remark}

There are certain instances where we get equality of dimension and the Jacobi bound. This is the case of characteristic sets. 
In particular this improves \cite[Theorem 1]{Li2016} which is ``the Jacobi bound conjecture for characteristic sequences''.
In this case we can upgrade the inequality to an equality. 
The inequality was proved in \cite[Theorem 1]{Li2016} using differential Chow forms. 

\begin{corollary}\label{corollary:characteristic-sets}
	Let $R=K\lbrace x_1,\ldots,x_n\rbrace$.
	Let $P \subset R$ be a prime differential ideal with characteristic set $(A_1,\ldots,A_n)$ with respect to some ranking $\prec$.
	Then
	$$ \trdeg_K(R/P) = J(A_1,\ldots,A_n). $$
	In particular $\trdeg_K(R/P)<\infty.$
\end{corollary}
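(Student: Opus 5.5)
We prove the two inequalities separately. Throughout, $d_i = \ord_{x_i}(A_i)$ for a suitable ordering of the variables, and $\Sigma_P$ denotes the closed subscheme $\Spec R/P$, whose dimension is $\trdeg_K(R/P)$.

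\emph{Structure of the characteristic set.} Since $P$ is prime, $\trdeg_K(R/P)$ equals the number of derivatives $\theta x_j$ that are not leaders of derivatives of the $A_i$. This is the standard count: $R/P$ is generated over the parametric derivatives, which are algebraically independent. Having exactly $n$ elements forces each variable to be the leader of exactly one $A_i$, say $x_i$ for $A_i$. Then the non-parametric derivatives are exactly $\theta^{k}x_i$ with $k\ge d_i$, so
$$\trdeg_K(R/P)=\sum_i d_i<\infty.$$
In particular finiteness is immediate. It remains to compare $\sum_i d_i$ with $J(A_1,\ldots,A_n)$, which is the maximum of $\sum_i \ord_{x_{\sigma(i)}}(A_i)$ over permutations $\sigma$.

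\emph{The inequality $\sum_i d_i\le J$.} Take $\sigma=\mathrm{id}$. Its term is $\sum_i \ord_{x_i}(A_i)=\sum_i d_i$, so $J\ge\sum_i d_i$. We should check that $\ord_{x_i}(A_i)=d_i$ for the leader variable. This holds because the leader is the highest derivative of $x_i$ occurring in $A_i$ under an orderly ranking. For a general ranking, the leader is only the highest-ranked derivative, so instead we define $d_i:=\ord_{x_i}(A_i)$ directly. The count above then uses only that the leader of $A_i$ is some derivative of $x_i$, and this leader need not be the highest-order one. This case is where care is needed. We would fall back on the fact that $\trdeg$ is independent of the ranking, or else use an orderly ranking from the start.

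\emph{The inequality $\trdeg\le J$.} This is the main obstacle, and we would use Theorem~\ref{T:main}. The prime $P$ equals $\sat(A)=[A]:H_A^\infty$. Localizing at $H_A$, the scheme $\Spec R[H_A^{-1}]/[A]$ is reduced and irreducible, with generic component $\Sigma_P$. Indeed, Rosenfeld's lemma makes $[A]:H^\infty$ equal to $P$, and $P$ is prime, hence radical. Thus $\Sigma_P$ is a generically reduced, finite-dimensional irreducible component of $\Spec R/[A_1,\ldots,A_n]$, since localization does not change generic points. Theorem~\ref{T:main} then gives $\trdeg_K(R/P)\le J(A_1,\ldots,A_n)$.

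The delicate point is justifying that $\Sigma_P$ is an irreducible component of $V([A])$ and that $[A]$ is generically reduced along it. The first step is to show that $P$ is a minimal prime over $[A]$. Any prime $Q$ with $[A]\subseteq Q\subseteq P$ avoids $H_A$, because $H_A\notin P$. Hence $P=[A]:H_A^\infty\subseteq Q$. The second step is generic reducedness: localizing at $P$ inverts $H_A$, and so $[A]_P=P_P$. Combining the two inequalities yields equality.
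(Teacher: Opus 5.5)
There is a genuine gap: both of your inequalities point the same way. Once you have $\trdeg_K(R/P)=\sum_i d_i$, the identity permutation already gives $\trdeg_K(R/P)\le J(A_1,\ldots,A_n)$. Your appeal to Theorem~\ref{T:main} proves the same inequality again. Its hypotheses are verified correctly: $P$ is minimal over $[A_1,\ldots,A_n]$ and $[A_1,\ldots,A_n]_P=P_P$. What you never prove is the reverse bound $J(A_1,\ldots,A_n)\le\sum_i d_i$, i.e.\ that no permutation $\sigma$ gives a larger sum than the diagonal one. Without it, ``combining the two inequalities'' does not yield equality.

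The missing idea is autoreducedness. For $i\neq j$, $A_i$ is reduced with respect to $A_j$, so it contains no proper derivative of the leader $x_j^{(d_j)}$ of $A_j$. Hence $\ord_{x_j}(A_i)\le d_j$; this also holds trivially when $x_j$ does not occur, since the order is then $0$. So every column of the matrix $(\ord_{x_j}(A_i))_{i,j}$ is maximized on the diagonal, and $\sum_j \ord_{x_j}(A_{\sigma(j)})\le\sum_j d_j$ for every $\sigma$. This is exactly the paper's remark that here $J$ equals the Ritt bound, the sum of the column maxima. The paper combines it with the parametric-derivative count (cited from Cluzeau--Hubert). The detour through Theorem~\ref{T:main} buys nothing, since the only inequality it yields is the one the identity permutation gives for free.

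Your worry about non-orderly rankings is unfounded, and your proposed fallbacks would not work anyway. The characteristic set, and hence $J(A_1,\ldots,A_n)$, depends on the ranking, so passing to an orderly ranking proves a statement about a different tuple. The correct resolution is that every ranking satisfies $x_i^{(k)}\prec x_i^{(k+1)}$. So the leader $x_i^{(e_i)}$ of $A_i$, being the highest-ranked derivative in $A_i$, is automatically the highest-order derivative of $x_i$ occurring in $A_i$. Thus $\ord_{x_i}(A_i)=e_i$ for any ranking, and your count $\trdeg_K(R/P)=\sum_i e_i$ is exactly $\sum_i d_i$.
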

The point is that $P=\sat_S([A_1,\ldots,A_n])$ where $S$ is the multiplicative set generated by initials and separants of $(A_1,\ldots,A_n)$ and that the initials and separants are not in $P$. 
If $\eta$ is the generic point of $P$, then this implies that $A_1,\ldots,A_n$ have a good linearization at $\eta$ since $\dfrac{\partial A_i}{\partial \ell_{A_i}}(\eta) \neq 0$.
The important equation for linearization in the text is \eqref{equation:linearized}.
By ``good linearization'' we mean the dimension of the component for $\eta$ is equal to the dimension of the linearized system at $\eta$.
See also \cite[Section 4.1]{Cluzeau2003}.

Corollary~\ref{corollary:characteristic-sets} implies that for any given explicit set of differential equations $u_1,\ldots,u_n \in K\lbrace x_1,\ldots,x_n\rbrace$ one can effectively determine whether Jacobi Bound Conjecture holds for $[u_1,\ldots,u_n]$ by a computer algorithm.
\begin{corollary}\label{corollary:decidability}
	The problem of determining if the Jacobi Bound Conjecture holds for a given set of differential equations $(u_1,\ldots,u_n)$ in $K\lbrace x_1,\ldots,x_n\rbrace$ is decidable. 
\end{corollary}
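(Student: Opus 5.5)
We need to show decidability of whether the Jacobi Bound Conjecture holds for a given system $(u_1,\ldots,u_n)$, i.e.\ whether $\Krull(\Sigma_1)\leq J(u_1,\ldots,u_n)$ for every finite-dimensional irreducible component $\Sigma_1$ of $\Sigma=\Spec K\lbrace x_1,\ldots,x_n\rbrace/[u_1,\ldots,u_n]$. The plan is to reduce the question to a finite computation with characteristic sets, using Corollary~\ref{corollary:characteristic-sets} to compute dimensions. We assume, as is implicit, that $K$ is a computable differential field, so that arithmetic and equality testing in $K$ and $K\lbrace x_1,\ldots,x_n\rbrace$ are effective.

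The first step is to note that the Krull dimension of an irreducible component depends only on the underlying topological space. The irreducible components of $\Sigma$ therefore correspond bijectively to the minimal primes $P_1,\ldots,P_r$ of the radical differential ideal $\sqrt{[u_1,\ldots,u_n]} = \{u_1,\ldots,u_n\}$. By the Ritt--Raudenbush basis theorem there are finitely many such primes. The Rosenfeld--Gröbner algorithm, or Ritt's decomposition algorithm, effectively produces finitely many characteristic sets (regular differential chains) $C_1,\ldots,C_s$ such that
$$\{u_1,\ldots,u_n\}=\bigcap_{k}\sat_{S_k}([C_k]),$$
where each $\sat_{S_k}([C_k])$ is a radical differential ideal whose minimal primes all have characteristic set $C_k$. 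From this decomposition we extract the minimal primes by discarding redundant components. Testing whether $P_j \subseteq P_k$ is effective via Ritt reduction of a finite generating set modulo a characteristic set. This is the classical irredundant decomposition algorithm; we use it only over characteristic zero, where the decomposition into characteristic sets is available.

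Next, for each minimal prime $P_j$ we compute its dimension. If $P_j$ has a characteristic set with exactly $n$ elements, one involving each $x_i$ as leader, then Corollary~\ref{corollary:characteristic-sets} gives
$$\trdeg_K(R/P_j)=J(A_1,\ldots,A_n),$$
and the right-hand side is a finite computation: a maximum over permutations of sums of orders. If the characteristic set has fewer than $n$ elements, then some variable is differentially transcendental modulo $P_j$, so $\dim=\infty$ and the component is irrelevant to the conjecture. Finally, we compute $J(u_1,\ldots,u_n)$ and compare it with $\trdeg_K(R/P_j)$ for each finite-dimensional component. Since $\Krull(\Sigma_j)=\trdeg_K(R/P_j)$ for a finite-dimensional component, whether the conjecture holds reduces to finitely many integer comparisons.

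The main obstacle is the justification that the decomposition yields exactly the minimal primes with usable characteristic sets. In particular, one must check that each minimal prime is determined by an effectively computable characteristic set. The point needing care is Lazard's lemma: it guarantees that $\sat_S([C])$ is radical and that its minimal primes share the characteristic set $C$. This is what allows Corollary~\ref{corollary:characteristic-sets} to be applied to every component with the same $C$, even though the prime ideals themselves are not explicitly split. We will cite these standard facts from differential algebra rather than reprove them.
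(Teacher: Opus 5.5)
\textbf{Gap: the irredundancy step is not known to be effective.} You claim that testing $P_j\subseteq P_k$ is effective ``via Ritt reduction of a finite generating set modulo a characteristic set.'' But the primes produced by the decomposition are known only through characteristic sets, i.e.\ as saturations $\sat_{S_j}([C_j])$, and no algorithm is known that turns such a presentation into a finite basis of $P_j$ as a radical differential ideal. Deciding $P_j\subseteq P_k$ for primes presented by characteristic sets is Ritt's problem, which is open. There is no irredundant decomposition algorithm in the differential setting; the Gr\"obner-basis procedure for finitely many variables does not transfer.

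Only a one-sided test is available. When $P_k=\sat_{S_k}([C_k])$ is prime, membership in $P_k$ is decidable by Ritt reduction modulo $C_k$, so you can decide whether $C_j\subseteq P_k$. If, in addition, no initial or separant of $C_j$ lies in $P_k$, then $P_j\subseteq P_k$. When some element of $S_j$ does lie in $P_k$, which is exactly the case of a singular component that may or may not lie inside a general one, nothing is known. Moreover, when $\sat_{S_k}([C_k])$ is not prime you do not have its minimal primes in hand to compare, as you note yourself.

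\textbf{A smaller inaccuracy.} Lazard's lemma does not say that the minimal primes of $\sat_S([C])$ have characteristic set $C$: for $C=(x^2-1)$ they are $[x-1]$ and $[x+1]$. It only says that their characteristic sets have the same leaders as $C$. That suffices for your purposes. Since $C$ is partially reduced, $J(C)$ is the sum of the orders of its leaders. By Corollary~\ref{corollary:characteristic-sets}, applied to each minimal prime's own characteristic set, this sum is the dimension of that prime.

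\textbf{How the paper avoids the problem.} The paper never tries to isolate the minimal primes. It simply compares $J(u_1,\ldots,u_n)$ with $J(g_1,\ldots,g_n)$ for every chain returned by Rosenfeld--Gr\"obner. To repair your proof, drop the irredundancy step and argue as follows.
\begin{enumerate}
\item Every minimal prime of $\sqrt{[u_1,\ldots,u_n]}$ is a minimal prime of some $\sat_{S_k}([C_k])$. Hence if $J(C_k)\le J(u_1,\ldots,u_n)$ for every chain $C_k$ with $n$ elements, the conjecture holds.
\item Conversely, a redundant prime $Q$ contains some minimal prime $P$, and $P\subseteq Q$ gives $\trdeg_K(R/Q)\le\trdeg_K(R/P)$. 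So a violation at $Q$ forces a genuine violation at $P$ whenever $P$ is finite-dimensional.
\end{enumerate}
This monotonicity says nothing about a redundant finite-dimensional chain lying only over infinite-dimensional components. The paper's short argument is silent on that case as well. So you should rule it out or flag it explicitly, rather than appeal to an inclusion test that is not known to exist.
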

The recipe to check JBC in any example is simple. 
The Jacobi bound of our original equations is straightforward to compute. 
One then computes the characteristic sequences for the prime ideals using the Rosenfeld--Gr\"{o}bner algorithm.
One can compute the dimension of these using Corollary~\ref{corollary:characteristic-sets}.
	
From Corollary~\ref{corollary:decidability} it is hence desirable to determine what happens to the Jacobi bound during the Rosenfeld--Gr\"{o}bner algorithm. 
We report that the weak Jacobi bound can increase during steps of the Rosenfeld--Gr\"{o}bner algorithm (even for linear differential equations) but in the end always decreases back to the original Jacobi bound. 

In \cite{Dupuy2026} we give a more detailed analysis of the reduction process and its impact on the strong Jacobi bound (this is where we define $\ord_{x_i}^{\partial}(u)=-\infty$ if $x_i$ does not appear in $u$). 
In this process \cite{Dupuy2026} actually repairs a gap in Ritt's proof of the linear case in \cite{Ritt1935}.

\subsection{Strategy of Proof}\label{S:strategy}
Our approach is inspired by the excellent paper \cite{Ollivier2022}, which first translates Jacobi's work into the setting of modern algebra and then improves upon it.
Our desire to understand the geometric framework of \cite{Ollivier2022}  (in particular, the relationship between first order perturbations of nonlinear differential equations and the tangent bundle of differential schemes) evolved into the current manuscript.
\\

\noindent First, we observe that linearizing a system $[u_1,\ldots,u_n]$ at a known solution $\eta$ doesn't increase the Jacobi bound. 
This is the standard linearization technique taught in differential equations courses where given a solution $\eta$ we expand our equations $u(x)=0$ using $x = \eta+ \varepsilon y$ where $\varepsilon^2=0$, resulting in linear equations in the variable $y$ with coefficients involving $\eta$. 
This means that if we can control the Krull dimension of linearized systems we can prove the Jacobi Bound Conjecture. 
\\

\noindent Second, our key observation is that the tangent bundle $T\Sigma \to \Sigma$ really encodes these first order perturbations. 
This means that $\pi\colon T\Sigma \to \Sigma$ is really a parameter space for linearized differential equations $T\Sigma_{\eta} =\pi^{-1}(\eta)$ as we vary $\eta \in \Sigma$.
Many of the ingredients here are not new but perhaps not synthesized in quite this way before. 
The equations for linearizations in relation to the tangent spaces appear in Kolchin \cite{Kolchin1986} (see Remark~\ref{remark:other-tangent-spaces}) and $D$-module structures on Kahler differentials necessary for taking the modern scheme theory approach (where $T\Sigma := \underline{\Spec}(\Sym(\Omega_{\Sigma/K}))$) appear in \cite{Johnson1974} and \cite{Ollivier2022}. \\

\noindent Finally, many standard theorems about dimensions of schemes which are finite type over a field (e.g., the standard fact that the dimension of the tangent bundle at the generic point of a component is at least the dimension of the component) don't apply directly to a generically reduced but non-finite type component $\Sigma_1$. 
We need to show that $\Sigma_1$ is birational to a scheme of finite type. 
In general $\Sigma_1$ is not finite type and this only works birationally. We then need to be careful that we can relate dimension estimates to the tangent space of an open set back to our original equations.
Some extra care is need to apply theorem like ``generic smoothness'' in our setting.

\subsection*{Acknowledgments}
Dupuy is supported by National Science Foundation grant DMS-2401570.
Zureick-Brown is supported by National Science Foundation grant DMS-2430098.
We would like to thank James Freitag for useful comments on an earlier version of this manuscript. 
We would also like to thank Yuval Dor for helpful conversations.
Dupuy is grateful for the hospitality of the GAATI laboratory at the l'Université de la Polynésie française, where he completed this manuscript.
\\

\subsection*{Software}
Experiments for this manuscript were performed in Magma \cite{Magma} using a differential algebra package \cite{dupuyZB:magma-package} developed by the authors. 
No LLMs were used during this project.

\section{Background and Notation.}
 
\label{section:notation}
In this section we follow \cite[section 2]{Buium1993} for the basic setup of modern differential algebraic geometry; see also \cite{Kolchin1973} and \cite[Chapter VII]{Kaplansky1976}.

The number $\Krull(\Sigma_1)$ appearing in the statement is the dimension of the underlying scheme $\Sigma_1$.	
By the dimension of a scheme $X$ over a field $K$ we mean the maximum transcendence degree of $\kappa(X_1)$ as $X_1$ varies over irreducible components of $X$.
 By $\kappa(X_1)$ we mean the function field of the underlying reduced irreducible scheme of $X_1$. 
 If $X=\Spec(A)$ and $X_1 = V(Q)$ with $Q$ a $P$-primary ideal, then $\kappa(X_1)=\kappa(P)$ which is by definition the fraction field of $A/P$. 
 We will denote the dimension of $X$ (resp $A$) by $\dim(X)$ (resp $\dim(A)$).
 When $X$ or $A$ are finite type over $K$ this coincides with the usual noetherian dimension (resp Krull dimension) of the scheme (resp ring). 

In \cite[section 2, pg 485]{Buium1993} Buium called this the classical schematic dimension the ``absolute dimension'' so that it is not confused with the notion of differential dimension. 

\subsection{Differential Algebra Background}

\begin{definition}
A \defi{differential ring} is a pair $(R,\partial)$ where $R$ is a ring and $\partial\colon R \to R$ is an additive map satisfying $\partial(ab) = \partial(a) b + a\partial(b)$.	
A morphism of differential rings $(B,\partial)\to (A,\partial)$ is a ring homomorphism $\sigma$ such that $\partial \sigma = \sigma \partial$. 
Kernels of morphisms of differential rings are differential ideals and there is a first isomorphism theorem. 
\end{definition}
We will often omit the pair $(R,\partial)$ and just write $R$ for a differential ring.
Similarly we will just write $\sigma\colon B\to A$ for differential ring homomorphisms. 
If $A$ and $B$ are differential rings, by a differential $B$-algebra we mean a $B$-algebra $A$ such that the structure map $B\to A$ is a morphism of differential rings. 

\begin{definition}
Let $R$ be a differential ring. 
The ring of \defi{differential polynomials over $R$} on $n$-variables is the polynomial ring in countably many variables over $R$
$$
 R\lbrace x_1,\ldots,x_n\rbrace= R[x_i^{(r)} \colon 1\leq i \leq n, r \geq 0 ]
$$
with the unique differential $R$-algebra structure specified by $\partial x_i^{(r)} = x_i^{(r+1)}$ for $1\leq i \leq n$ and $r\geq 0$. 
\end{definition}
In this $R\lbrace x_1,\ldots,x_n\rbrace$ we identify $x_i^{(0)}$ with $x_i$ and use primes for low order derivatives as is customary, e.g.~$x_1'' = x_1^{(2)}$.

The order of a differential polynomial $u$ with respect to some differential variable is the largest derivative of the variable that appears. 
In the above notation given a finite set $S \subset \RR$ we let $\Max^{+}(S) = \Max( S \cup \lbrace 0 \rbrace)$. 
\begin{definition}
	Let $u\in R\lbrace x_1,\ldots,x_n \rbrace$.
	The \defi{order of $u$ with respect to} $x_j$ is 
	 \begin{equation}\label{equation:order}
	 	\ord_{x_j}^{\partial}(u) = \operatorname{max}^{+} \lbrace r \in \ZZ_{\geq 0} \colon \frac{\partial u}{\partial x_j^{(r)}} \neq 0\rbrace. 
	 \end{equation} 
\end{definition}
Given a collection of elements $u_1,\ldots, u_e$ in $R\lbrace x_1,\ldots,x_n\rbrace$ we use the notation 
$$[u_1,\ldots,u_e] = \langle \partial^j(u_i) \colon 1\leq i \leq e, j \geq 0 \rangle, $$
generated by the $u_i$'s and all of their derivatives.
Equivalently, it is the smallest ideal in $R\lbrace x_1,\ldots,x_n\rbrace$ which is closed under $\partial$ and contains the set $\lbrace u_1,\ldots,u_r\rbrace$.\footnote{
In differential algebra it is common to let $\lbrace u_1,\ldots,u_e\rbrace$ to denote the radical of $[u_1,\ldots,u_e]$. We do not follow this convention here and will reserve braces for sets.}

The following is well-known. See for example \cite[\S5, last paragraph]{Johnson1969a}. Since we don't know of a reference with a proof we supply one.
\begin{proposition}[Minimal Prime Principle]\label{proposition:minimal-prime-principle}
	Let $R = K\lbrace x_1,\ldots,x_n\rbrace$ and let $I \subset R$ be a differential ideal.
	\begin{enumerate}
		\item Any prime ideal $P \supset I$ which is minimal over $I$ is a differential ideal. 
		\item There exists a finite irredundant collection of differential primes $Q_1,\ldots,Q_m$ such that $\sqrt{I} = Q_1 \cap \cdots \cap Q_m$. 
	\end{enumerate}
\end{proposition}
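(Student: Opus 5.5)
We prove the two parts separately. Part (1) rests on the standard characteristic-zero fact that radicals of differential ideals are differential ideals, combined with localization. Part (2) follows from the Ritt--Raudenbush basis theorem.

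\textbf{Part (1).} Let $P \supset I$ be a prime minimal over $I$. Localize at $P$: in $R_P$ the ideal $IR_P$ has a unique minimal prime $PR_P$, so $\sqrt{IR_P} = PR_P$. Since $K \supset \mathbb{Q}$, the derivation extends uniquely to $R_P$ by the quotient rule, and $IR_P$ is again a differential ideal. We then invoke the classical lemma: if $J$ is a differential ideal in a $\mathbb{Q}$-algebra and $a^m \in J$, then $(a')^{2m-1} \in J$. This is proved by repeatedly differentiating $a^m$ and multiplying by $a'$, using division by integers. Consequently $\sqrt{J}$ is differential. Applying this to $J = IR_P$, we get that $PR_P$ is a differential ideal. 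Finally, $P = PR_P \cap R$, and the localization map $R \to R_P$ commutes with $\partial$, so for $a \in P$ we have $\partial a \in PR_P \cap R = P$. This is the step needing the most care: one must check that the extended derivation is well defined and that the radical lemma holds in $R_P$. Both use only characteristic zero.

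\textbf{Part (2).} Since $R$ is a non-noetherian polynomial ring in countably many variables, the ordinary primary decomposition is unavailable. We replace it with the Ritt--Raudenbush basis theorem: in characteristic zero, radical differential ideals of $K\lbrace x_1,\ldots,x_n\rbrace$ satisfy the ascending chain condition. By part (1) and the radical lemma, $\sqrt{I}$ is a radical differential ideal, and $\sqrt{I}$ is the intersection of all primes minimal over $I$, each of which is differential by (1). To get finiteness, we run noetherian induction on radical differential ideals. Suppose the set of radical differential ideals that are not finite intersections of differential primes is nonempty; by ACC it has a maximal element $J$. Then $J$ is not prime, so there exist $a,b \notin J$ with $ab \in J$. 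Let $\{J,a\}$ and $\{J,b\}$ denote the radical differential ideals generated by $J$ together with $a$, respectively $b$. The standard Ritt lemma gives $\{J,a\}\cap\{J,b\} = \{J,ab\} = J$. Both ideals strictly contain $J$, so by maximality each is a finite intersection of differential primes, and hence so is $J$, a contradiction.

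Applying this to $\sqrt{I}$ expresses it as a finite intersection of differential primes. Discarding redundant ones, so that no remaining prime contains another, yields the irredundant collection $Q_1,\ldots,Q_m$. These are then exactly the minimal primes of $I$, which is consistent with (1). We regard the Ritt--Raudenbush theorem and the identity $\{J,a\}\cap\{J,b\}=\{J,ab\}$ as standard citations from Kaplansky or Kolchin, which the paper already references. If a self-contained argument is desired, that identity follows from $\{S\}\{T\}\subset\{ST\}$, which in turn is a consequence of the same radical lemma.
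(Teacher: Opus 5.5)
Your proof is correct, but it runs in the opposite logical direction from the paper's.

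\textbf{The paper's route.} It takes (2) as a black box, citing Buium's theorem that $\sqrt{I}$ is a finite intersection of differential primes. It then deduces (1) from (2): after discarding redundancies, any prime $P\supseteq I$ contains $Q_1\cdots Q_m$, hence contains some $Q_i$. So a minimal $P$ must equal some $Q_i$ and is therefore differential.

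\textbf{Your route for (1).} You prove it directly and locally. In $R_P$ the ideal $PR_P$ is the unique prime over $IR_P$, so $PR_P=\sqrt{IR_P}$. This radical is differential by the radical lemma, and contracting back to $R$ gives that $P$ is differential.

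\textbf{Your route for (2).} You essentially reprove the cited theorem, using Ritt--Raudenbush, noetherian induction and $\{J,a\}\cap\{J,b\}=\{J,ab\}$.

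\textbf{What each buys.}
\begin{itemize}
\item Your argument for (1) needs no finiteness input at all. It holds verbatim for primes minimal over a differential ideal in any differential $\mathbb{Q}$-algebra, with no basis theorem.
\item The paper's deduction of (1) needs the finite decomposition, and so only works where a Ritt--Raudenbush-type theorem is available.
\item The paper's version is shorter given the citation.
\item Your treatment of (2) makes the whole proposition self-contained modulo the basis theorem.
\end{itemize}

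\textbf{Two minor remarks.} Extending $\partial$ to $R_P$ by the quotient rule does not need characteristic zero; only the radical lemma does. In the noetherian induction, $\{J,a\}$ and $\{J,b\}$ are automatically proper, since their intersection is $J$ and each strictly contains $J$. So you never need to treat $R$ as an empty intersection of primes.
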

\begin{proof}
	The second statement is \cite[Theorem 3.3 of Chapter 2]{Buium1994}. There exist distinct prime differential ideals $Q_1,\ldots,Q_m$ such that $I = Q_1 \cap \cdots \cap Q_m$. 
	Choosing $m$ minimally, it follows from \cite[Theorems 4.5 and 4.6]{Atiyah1969} that the $Q_i$ are exactly the set of minimal primes over $I$; in particular, every minimal prime over a differential ideal is a differential prime ideal.
\end{proof}

Our division algorithms will be phrased in terms of Weyl algebras.
\begin{definition}
Let $R$ be a differential ring. 
The \defi{Weyl algebra} of $R$ is the (non-commutative) associative algebra $R[\partial]$ which is isomorphic to $\bigoplus R\partial^d$ as a left $R$-module and satisfies the rule $\partial r = r \partial + \partial(r)$ for every $r \in R$.
\end{definition}

\begin{definition}
	A left $R$-module $M$ is a \defi{$D$-module} if it is a left $R[\partial]$-module.
\end{definition}
A left $R$-module $M$ being a $D$-module is equivalent to being equipped with a map $\partial\colon M \to M$ such that $\partial(rm) = \partial(r)m + r\partial(m)$ for $r\in R$ and $m\in M$.
The differential ring $R$ itself is a $D$-module since it is a left $R[\partial]$-module. 
A differential ideal $[u_1,\ldots,u_e]$ for $u_1,\ldots,u_e \in R\lbrace x_1,\ldots,x_n\rbrace$ is a $D$-module which is generated as a left $R\lbrace x_1,\ldots,x_n\rbrace[\partial]$-module by $\{u_1,\ldots,u_e\}$; $[u_1,\ldots,u_n] = R\lbrace x_1,\ldots,x_n\rbrace[\partial] u_1 + \cdots + R\lbrace x_1,\ldots,x_n\rbrace[\partial] u_n$. It is the smallest left $R\lbrace x_1,\ldots,x_n\rbrace[\partial]$-module of $R\lbrace x_1,\ldots,x_n\rbrace$ containing $u_1,\ldots,u_e$ and every element $f\in [u_1,\ldots,u_e]$ can be written $f=Q_1u_1 + \cdots Q_eu_e$ where $Q_i \in R\lbrace x_1,\ldots,x_n\rbrace[\partial]$ for $1\leq i \leq e$. 

For $D$-schemes and differential varieties we follow \cite[Ch 3, \S3, pg 68]{Buium1994}.
\begin{definition} 
	By \defi{$D$-scheme} we mean a scheme $\Sigma$ such that $\Ocal_{\Sigma}$ is a sheaf of differential rings. 
	A morphism of $D$-schemes is a morphism of schemes where the morphism of structure sheaves is a morphism of differential rings. 
\end{definition}
	We also call $D$-schemes \defi{differential schemes}.
 If $R$ is a differential ring, then $\Spec(R)$ is a $D$-scheme. 
 By a $D$-scheme over $R$ we mean a morphism of $D$-schemes $\Sigma \to \Spec(R)$. 
 Equivalently this is a $D$-scheme $\Sigma$ such that $\Ocal_\Sigma$ is a sheaf of differential $R$-algebras. 
 
 Let $\Sigma$ be a $D$-scheme over $K$. 
 By a \defi{differential $L$-point} of $\Sigma$ we mean a morphism $\Spec(L) \to \Sigma$ which is a morphism of $D$-schemes.
 	
 \begin{remark}
 	Suppose that $\Sigma = \Spec K\lbrace x_1,\ldots,x_n\rbrace/I$ where $I$ is a differential ideal. 
 	Not all scheme theoretic points $x \in \Sigma$ are differential points. 
 	The generic point $\eta$ of an irreducible component of $\Sigma$ is a differential point.
 	This follows from Proposition~\ref{proposition:minimal-prime-principle}.
 \end{remark}

 Let $K$ be a field. 
 For any $K$-algebra $R$ by $\Krull(R)$ we mean the largest $n$ such that there exists an injection of a polynomial ring in $n$ variables over $K$ into $R$. 
 Similarly for any scheme $\Sigma$ over $K$ we will let $\Krull(\Sigma)$ denote the dimension of $\Sigma$.
 For irreducible schemes $\Sigma$ this is $\trdeg_K(\kappa(\Sigma))$, the transcendence degree over $K$ of the function field $\kappa(\Sigma)$ of the underlying reduced scheme.
 For general schemes over $K$, $\dim(\Sigma)$ will mean maximal dimension of its irreducible components.
 For $D$-schemes, to distinguish this from the ``differential dimension'', this is called the \defi{absolute dimension} in \cite{Buium1993} and the \defi{order} in \cite{Kolchin1973}.
 We have chosen $\Krull$ as it is less ambiguous and agrees with the standard terminology used by commutative algebraists and algebraic geometers.
 If $\Sigma$ is finite type and integral over a field $K$, then $\Krull(\Sigma)$ coincides with the Krull dimension.

 \begin{definition}
 \label{definition:jacobi-number} 
	Let $R$ be a differential ring. 
	Let $u_1,\ldots,u_n \in R\lbrace x_1,\ldots,x_n\rbrace$.
	The \defi{Jacobi number} of the tuple $(u_1,\ldots,u_n)$ is defined to be 
	$$J(u_1,u_2,\ldots,u_n) = \Max_{\sigma \in S_n} \sum_{i=1}^n \ord_{x_i}^{\partial}(u_{\sigma(i)}).$$
\end{definition}

We now state the Jacobi bound conjecture. 
 \begin{conjecture}[Jacobi Bound Conjecture (JBC) {\cite[pg 303]{Ritt1935}}, {\cite[pg 274]{Kolchin1968}}]\label{C:jbc}
Let $K$ be a differential field and let $\Sigma$ be a differential algebraic variety defined by $u_1,u_2,\ldots,u_n \in K\lbrace x_1,\ldots,x_n\rbrace$. Let $\Sigma_1$ be an irreducible component of $\Sigma$ and suppose that $\Sigma_1$ has finite Krull dimension. Then
\[
 \Krull(\Sigma_1) \leq J(u_1,u_2,\ldots,u_n).
\]
 \end{conjecture} 

\subsection{Ritt's Division Algorithm and Characteristic Sets}
\label{subsection:pseudodivision-and-characteristic-sets}
Let $R$ be a differential ring. 
\begin{definition}
	A \defi{ranking} on $R\lbrace x_1,\ldots,x_n\rbrace$ is an ordering $\prec$ on the set $\lbrace x_i^{(j)} \colon 1 \leq i \leq n, j \geq 0 \rbrace$ such that for all $v$ and $u$ in that set if $u\prec v$, then $\partial(u) \prec \partial(v)$ and such that for all $u$ in the set $u \prec \partial(u)$. 
\end{definition}
We identify rankings with the lexicographic term orders they induce on $R\lbrace x_1,\ldots,x_n\rbrace$. 
Often we use a \defi{Ritt ranking} where terms are ordered by variable, then by order, then by degree. 
\begin{definition}
	Fix a ranking $\prec$ on $R\lbrace x_1,\ldots,x_n \rbrace$. 
	Let $A \in R\lbrace x_1,\ldots,x_n\rbrace$.
	\begin{enumerate}
		\item The \defi{leader} $\ell_A$ of $A$ is the largest variable occurring in $A$ according to the ranking. 	
		\item If $\ell_A = x_i^{(r)}$, then the \defi{leading variable} is $t_A = x_i.$
		\item The \defi{separant} of $A$ to be $s_A=\dfrac{\partial A}{\partial \ell_A}$.
		\item If $A = \sum_{i = 0}^d a_i\ell_A^i$, then the \defi{initial} $I_A$ of $A$ is $a_d$.
	\end{enumerate}
\end{definition}
The \defi{rank} of an element $A = \sum_{i = 0}^{d_A} a_i\ell_A^i\in R\lbrace x_1,\ldots,x_n\rbrace$ is the $\ell_A^{d_A}$.
We say that $B$ is \defi{Ritt-reduced} with respect to $A$ if $B$ is of lower rank in the leader of $A$.
We note that for $A \in K\lbrace x_1,\ldots,x_n\rbrace$ and $j > 0$, we have  
$$
I_{A^{(j)}} = s_A\ell_{A}^{(j)}.
$$
In other words, while differentiating generally jumbles around the terms of a polynomial $A$, there is a nice formula for the leading term of any derivative of $A$.

Now let's work with the polynomial ring $K\lbrace x_1,\ldots,x_n\rbrace$ with coefficients in a differential field $K$ of characteristic zero; all fields we consider will have characteristic zero.

\begin{example}
	Consider $K\lbrace x_1,x_2\rbrace$ with $x_1\prec x_2$. 
	Consider $A,B \in K\lbrace x_1,x_2 \rbrace$ where $A = x_1'+1$ and $B = x_2''x_1''+x_1^2$. 
	Although $A\prec B$, $B$ is not reduced with respect to $A$ since $x_1''$ appears in $B$ and $x_1'' \succ x_1' = \ell_{A}$.
	The separant and initial of $C=x_1x_1'+1$ are both $x_1$.
\end{example}

This modified division algorithm (where we take derivatives, and multiply by the separant) is called \defi{Ritt Division}.

\begin{theorem}[Ritt's Division Algorithm {\cite[Lemma 7.3]{Kaplansky1976}}]
 \label{theorem:pseudodivision}
	Let $A, B \in R = K\lbrace x_1,x_2,\ldots,x_n \rbrace$. Then there exists some $s,t\in \ZZ_{\geq 0}$, $Q \in R[\partial]$, and $C \in R$ such that 
\[
	 S_A^s I^t_A B = QA + C
\]
where $C$ is reduced with respect to $A$ and the operation of $Q$ is via the usual $D$-module structure.
\end{theorem}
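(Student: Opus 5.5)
We will prove this by a double induction: an outer induction on the rank $\ell_A^{d_A}$ of $A$, and an inner reduction, in decreasing ranking order, of every variable of $B$ that is a proper derivative of $\ell_A$. Write $\ell_A = x_i^{(r)}$. The key input is the identity already noted above: for $j>0$,
\[
A^{(j)} = s_A\,x_i^{(r+j)} + T_j ,
\]
where $T_j$ involves only variables ranked below $x_i^{(r+j)}$. This is because $\partial$ is a derivation, and the ranking axioms give $\partial u \prec \partial v$ whenever $u \prec v$. So $A^{(j)}$ is linear in its leader, and its initial is the separant $s_A$. Note also that $A^{(j)} = \partial^j A$ is obtained from $A$ by the operator $\partial^j\in R[\partial]$ acting through the $D$-module structure.

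\textbf{Step 1: remove proper derivatives of $\ell_A$.} Let $x_i^{(r+j)}$ with $j>0$ be the highest proper derivative of $\ell_A$ occurring in $B$, say with degree $e$. Write $B=\sum_k b_k (x_i^{(r+j)})^k$ and substitute $s_A x_i^{(r+j)} = A^{(j)} - T_j$. This gives
\[
s_A^{e} B = Q_j A^{(j)} + B_1 ,
\]
where $B_1$ no longer contains $x_i^{(r+j)}$. Concretely, this is ordinary pseudo-division of $B$ by the polynomial $A^{(j)}$, which is monic up to the factor $s_A$ in the variable $x_i^{(r+j)}$. The new variables introduced in $B_1$ all come from $T_j$ and the $b_k$, so each is ranked below $x_i^{(r+j)}$. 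The highest derivative of $\ell_A$ appearing in $B_1$ is therefore strictly smaller than before. Since only finitely many derivatives $x_i^{(r+1)},\dots,x_i^{(r+j)}$ lie below the current one, iterating terminates. At the end we have $s_A^{s}B = Q'A + B'$, where $Q'=\sum_j q_j\partial^j$ and $B'$ contains no proper derivative of $\ell_A$.

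\textbf{Step 2: reduce the degree in $\ell_A$ itself.} Perform ordinary pseudo-division of $B'$ by $A$ as polynomials in the single variable $\ell_A$ over the ring of the remaining variables:
\[
I_A^{t} B' = qA + C, \qquad \deg_{\ell_A} C < d_A .
\]
The coefficients involved mention only variables other than proper derivatives of $\ell_A$, and pseudo-division does not create proper derivatives of $\ell_A$, so $C$ is reduced with respect to $A$. Multiplying the Step 1 identity by $I_A^t$ and combining gives
\[
s_A^s I_A^t B = (I_A^tQ' + q)A + C ,
\]
with $Q := I_A^tQ'+q \in R[\partial]$.

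The only delicate point is the termination and bookkeeping in Step 1. We must check that after each substitution no derivative of $\ell_A$ of equal or higher order reappears, and this is exactly what the ranking axioms guarantee. We must also check that the left multipliers combine into a genuine element of $R[\partial]$ acting on $A$. This holds because $R[\partial]$ is closed under left multiplication by $R$: a term $q\,\partial^j$ applied to $A$ is $q A^{(j)}$. Finally, we use that the powers of $s_A$ commute past everything, since $R$ is commutative and they are simply multiplied on the left.
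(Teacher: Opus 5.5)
Your proof is correct and is the standard argument from the cited source; the paper itself gives no proof beyond the reference to Kaplansky's Lemma~7.3. That argument first eliminates proper derivatives of $\ell_A$ via $A^{(j)} = s_A\ell_A^{(j)} + T_j$ and then pseudo-divides by $A$ in $\ell_A$ using the initial. Two cosmetic points: the announced ``outer induction on the rank of $A$'' is never used, and the variables of the $b_k$ need not rank below $x_i^{(r+j)}$. What you actually need, and what holds by maximality of $j$, is only that no $x_i^{(m)}$ with $m \ge r+j$ occurs in $B_1$.
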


Let $A = (A_1, \ldots, A_r)$ be a sequence of elements of $K\lbrace x_1,\ldots,x_n \rbrace$. We say that $A$ is \defi{autoreduced} if each $A_{i+j}$ is reduced with respect to $A_i$, and $A_{i+1}$ involves a variable that does not appear in $A_i$.
Let $S=S_A$ be the multiplicative monoid generated by the separants and initials of $A_1,\ldots,A_r$. Let $B \in R$. If $A$ is autoreduced, then repeated application of Theorem \ref{theorem:pseudodivision} gives an element $s \in S$, $Q_i \in R[\partial]$, and $c \in R$ such that
\[
sB = Q_1A_1 + \cdots + Q_rA_r + c
\]
where $c$ is reduced with respect to each $A_i$.
This is called the \defi{Ritt division algorithm}.
For details we refer to \cite{Hubert2000}.

\begin{example}
Let $f=x'+y'''$ and $g=x^2+y''x'+t$ in $\CC(t)\lbrace x,y\rbrace$ equipped with a term order which is lexicographic with $x^{(i)}>y^{(j)}$ for all $i$ and $j$, and $x^{(i+1)}>x^{(j)}$ (resp., $y^{(i+1)}>y^{(j)}$) for all $i$.
Then we have 
		$$ s f = Qg + r $$
where 
 $$ s= -(x')^2,\quad Q=x''-x'\partial, \quad r=-x^2x'' + -tx'' + -(x')^3 + 2x(x')^2 + x'.$$

\end{example}
One defines a partial ordering on autoreduced inductively. 
Let $A = (A_1,A_2,\ldots, A_r)$ and $B = (B_1,B_2,\ldots,B_s)$.
We say that $A \succ B$ if and only if
\begin{enumerate}
	\item there exists some $i$ with $0<i<r$ such that
	$$\ell_{A_j}^{d_{A_j}} = \ell_{B_j}^{d_{B_j}}\qquad \forall j, \mbox{ with } 0<j<i$$ where $d_{A_j} = \deg_{\ell_{A_j}}(A_j)$ and $d_{B_j}=\deg_{\ell_{B_j}}(B_j)$ and $A_i \prec B_i$, or
	\item 
	$\ell_{A_j}^{d_{A_j}} = \ell_{B_j}^{d_{B_j}}$ for all $j\leq r$ and $s>r$.
\end{enumerate}

Let $I \subseteq K\lbrace x_1,x_2,\ldots,x_n \rbrace$ be an ideal. 
\begin{definition}
	A sequence $A_1,\ldots,A_r \in I$ is a \defi{characteristic sequence} for $I$ if $A_1,\ldots,A_r$ is a minimal autoreduced sequence. 
\end{definition}
Characteristic sequences are often called \defi{characteristic sets} in the literature and we will sometimes do so but in doing so we keep in mind that these are really sequences. 

If $S$ is a multiplicatively closed subset of a ring $R$ and $I$ is an ideal in $R$ the \defi{saturation} of $I$ with respect to $R$ is $\sat_S(I) = \lbrace f \in R \colon \exists s \in S, sf \in I \rbrace$. 
This is the same as $\varphi_S^{-1}(I S^{-1}R)$ where $\varphi_S\colon R \to S^{-1}R$ is the localization map \cite[pg 41]{Atiyah1969}.

\begin{theorem}[{\cite[Theorem 4.5, Theorem 5.2, and Algorithm 7.1]{Hubert2000}}]
\label{theorem:characteristic-sets-exist}
 If $P$ is a prime $\partial$-ideal, then there exists a characteristic set $A=(A_1,\ldots,A_r)$ with $A_i \in P$ such that $P=\sat_S([A_1,\ldots,A_r])$ where $S$ is the multiplicative subset generated by the initials and separants of $A$.
\end{theorem}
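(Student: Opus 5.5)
The plan is Ritt's classical argument: take an autoreduced sequence in $P$ of minimal rank, show that no initial or separant lies in $P$, and then use the Ritt division algorithm (Theorem~\ref{theorem:pseudodivision}) to obtain both inclusions of $P=\sat_S([A_1,\ldots,A_r])$.

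\textbf{Step 1: existence of a minimal autoreduced sequence.} I will first show that the ordering $\succ$ on autoreduced sequences is well founded.
\begin{itemize}
\item The leaders of an autoreduced sequence involve pairwise distinct differential variables $x_i$. Otherwise one leader would be a derivative of another, and reducedness would fail. So every autoreduced sequence has length at most $n$.
\item Each rank $\ell_A^{d_A}$ is a pair (derivative, degree). The set of derivatives $x_i^{(j)}$ is well ordered under any ranking, by a Dickson-type argument using $u\prec\partial u$ and compatibility with $\partial$.
\item The comparison of autoreduced sequences is lexicographic in the ranks of their entries, over sequences of bounded length. A standard diagonal argument then shows that every strictly decreasing chain of autoreduced sequences is finite.
\end{itemize}
Since $P$ contains autoreduced sequences (for instance the empty one, or any single nonzero element), there is one of minimal rank. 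Call it $A=(A_1,\ldots,A_r)$. I expect this well-foundedness, and the care needed in its bookkeeping, to be the main technical obstacle.

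\textbf{Step 2: initials and separants avoid $P$.} Consider $s_{A_i}$ and $I_{A_i}$. Each is nonzero, because $\deg_{\ell_{A_i}}A_i\ge 1$ and the characteristic is zero. Each has lower rank than $A_i$, and each is reduced with respect to $A_1,\ldots,A_{i-1}$, since it involves no derivatives beyond those appearing in $A_i$.

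Suppose $s_{A_i}\in P$. Then $(A_1,\ldots,A_{i-1},s_{A_i})$, truncated after inserting $s_{A_i}$ at its proper place, is an autoreduced sequence in $P$ of strictly lower rank than $A$. This contradicts minimality. The same argument applies to $I_{A_i}$. Because $P$ is prime, it follows that $S\cap P=\emptyset$.

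\textbf{Step 3: $\sat_S([A])\subseteq P$.} We have $[A]\subseteq P$ because $P$ is a differential ideal containing each $A_i$. If $sf\in[A]$ with $s\in S$, then $sf\in P$ and $s\notin P$. Since $P$ is prime, $f\in P$.

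\textbf{Step 4: $P\subseteq\sat_S([A])$.} Let $f\in P$. Ritt division gives
\[
sf=Q_1A_1+\cdots+Q_rA_r+c
\]
with $s\in S$ and $c$ reduced with respect to every $A_i$. Then $c\in P$. Suppose $c\neq 0$. Inserting $c$ into $A$ and discarding the later entries that $c$ fails to reduce yields an autoreduced sequence in $P$ of lower rank than $A$. This contradicts minimality. Hence $c=0$, so $sf\in[A]$ and $f\in\sat_S([A])$.

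The subtle point in Step 4 is checking that this truncated sequence really is autoreduced and strictly lower in $\succ$. This follows the same pattern as Step 2.
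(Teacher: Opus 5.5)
Your argument is correct. It is the classical Ritt--Kolchin proof: take a minimal autoreduced sequence in $P$, show that its initials and separants avoid $P$, and use Ritt division plus minimality to force every remainder of an element of $P$ to vanish. The paper gives no argument of its own for this statement; it simply defers to Hubert's results, which also include an algorithmic (Rosenfeld--Gr\"obner) route.

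The two routes buy different things:
\begin{itemize}
\item Your proof is self-contained modulo Ritt division.
\item It proves the sharper fact that \emph{every} minimal autoreduced sequence in $P$ satisfies $P=\sat_S([A_1,\ldots,A_r])$, not merely that one such sequence exists.
\item On the other hand, it is non-constructive, since the minimal sequence comes only from well-foundedness. The effective construction behind the citation is what the paper relies on later, in its decidability corollary.
\end{itemize}

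Two small points should be tightened.

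First, you are implicitly using the standard conventions, and you should say so. These are: autoreduced means each entry is reduced with respect to every other; entries are listed by increasing rank; and a sequence that extends another with the same ranks counts as lower. Steps 1, 2 and 4 rely on exactly these conventions, not on the paper's somewhat different wording.

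Second, the check you defer in Step 4 to ``the same pattern as Step 2'' is not actually carried out in Step 2. It is short. Let $B\neq 0$ be in $P$, so $B$ is nonconstant because $P$ is proper. Suppose $B$ is reduced with respect to $A_1,\ldots,A_{k-1}$, and these have rank not exceeding that of $B$.
\begin{itemize}
\item No $A_j$ with $j<k$ can have the same rank as $B$.
\item $\ell_{A_j}=\ell_B$ is also excluded by the degree condition. Hence $\ell_{A_j}\prec\ell_B$, so $A_j$ involves neither $\ell_B$ nor any proper derivative of it.
\item Therefore $(A_1,\ldots,A_{k-1},B)$ is autoreduced.
\item It is strictly lower than $A$: either $B$ has lower rank than $A_k$, or, when $k=r+1$, the new sequence is longer with the same initial ranks.
\end{itemize}
With these points made explicit, Steps 2 and 4 are complete.
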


\begin{remark}
 In any characteristic sequence the leading variables of the $A_i$ are all distinct, otherwise the sequence would not be autoreduced. 
Let $P$ be a prime differential ideal with characteristic set $A=(A_1,\ldots,A_r)$. If the differential scheme $\Spec K\lbrace x_1,x_2,\ldots,x_n \rbrace/P$ has finite Krull dimension, then it follows from the saturation claim that $r = n$. 
Otherwise the differential transcendence degree would be infinite since there would be a free differential variable not appearing as the leading variable of any of the elements. 
\end{remark}

For a prime ideal $P \subset K\lbrace x_1,\ldots,x_n\rbrace$ with a characteristic sequence $A=(A_1,\ldots,A_n)$ and an element $f \in K\lbrace x_1,\ldots,x_n\rbrace$ we have that $f \in P$ if and only if the remainder of $f$ after Ritt division by $A$ is zero. 
\cite[III.8, Lemma 5]{Kolchin1973}. 
This is called sometimes called Rosenfeld's Lemma. See \cite[below Theorem 1]{Kondratieva2006}.

\begin{example}
In the ring $K\lbrace x, y \rbrace$ with the ranking given by $((x),(y))$ we will compute the characteristic sets of $\sqrt{[f_1,f_2]}$ where $f_1=x''+y$ and $f_2=(x')^2+y$.
The differential ideal $\sqrt{I}$ has two components $P_1$ and $P_2$ where the first is given by 
 $$ P_1 = \sat_{S_1}([(y')^2+4y^3, 2yx'-y']),$$
where $S_1$ is the multiplicatively closed subset of $K\lbrace x,y\rbrace$ generated by $y,y'$ which are (up to a multiple of $K$) the separants of $2yx'-y$ and $(y')^2+4y^3$ respectively. 
The second component is given by 
 $$ P_2 = [y,x']$$
and is saturated by nothing (all the initials and separants here are trivial).
\end{example}

\section{Generic Smoothness}\label{section:generic-smoothness}
Our proof will rely on the fact that if $\Sigma_1$ is a generically reduced differential scheme of finite Krull dimension, then its tangent bundle is finite dimensional. If one omits the generic reducedness hypothesis, the tangent bundle of $\Sigma_1$ may fail to be finite dimensional, even if $\Sigma_1$ is itself finite dimensional; see Example \ref{example:really-big-point}.

\begin{example}
 \label{example:really-big-point}
 The ring $A = K\lbrace x\rbrace/[x^2]$ is not finitely generated. Its minimal prime is $[x] = \langle x,x^{(1)},\ldots,x^{(n)},\ldots\rangle$, the reduction $A_{\red}$ of $A$ is isomorphic to $K$, and for each $i$, $x^{(i)}$ is nilpotent.
 First observe that $\partial(x^2) = 2xx'$ and $\partial^2(x^2)=2(x')^2+2xx''$.
 Multiplying $\partial(x^2)$ by $x'$ implies that $(x')^3 \in [x^2]$. 
 We can then proceed inductively. 
 Indeed, by \cite[Lemma 1.7]{Kaplansky1976}, if $a^n \in [x^2]$, then $(a')^{2n-1} \in [x^2]$.
 This proves $(x'')^5 \in [x^2], (x''')^9 \in [x^2]$ etc. 
 In particular, $\Sigma = \Spec A$ is topologically a single, highly non-reduced point, and $\Sigma$ is not generically reduced.
\end{example}

\begin{proposition}
\label{prop:finite-typeness}
Let $\Sigma = \Spec K\{x_1,\ldots,x_n\}/[u_1,\ldots,u_n]$ and let $\Sigma_1$ be an irreducible component of $\Sigma$ such that $\Sigma_1$ is generically reduced and has finite Krull dimension.
There exist nonempty open subsets $U' \subseteq U \subseteq \Sigma_1$ such that $U$ is finite type over $K$ and $U'$ is smooth over $K$.
\end{proposition}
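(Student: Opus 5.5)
Let $R = K\lbrace x_1,\ldots,x_n\rbrace$, $I=[u_1,\ldots,u_n]$, and let $P$ be the minimal prime of $I$ with $\Sigma_1 = V(P)$. By Proposition~\ref{proposition:minimal-prime-principle}, $P$ is a differential prime. I interpret $\Sigma_1$ as the closed subscheme $\Spec R/Q$, where $Q$ is the $P$-primary component of $I$. Equivalently, near the generic point $\eta$ it is $\Spec$ of a localization of $R/I$ away from the other components. Generic reducedness says the local ring $(R/I)_P$ is the field $\kappa(P)=\Frac(R/P)$, that is, $I R_P = P R_P$. The plan has two steps. First, produce a nonempty open $U$ of finite type. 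Second, use characteristic zero to shrink $U$ to a smooth $U'$.

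\textbf{Step 1: finite type.} By Theorem~\ref{theorem:characteristic-sets-exist}, $P=\sat_S([A_1,\ldots,A_r])$ for a characteristic set $A$, with $S$ generated by initials and separants. Since $\trdeg_K \kappa(P)<\infty$, the Remark after that theorem gives $r=n$. Let $h=\prod_i I_{A_i}s_{A_i}\notin P$. By Ritt division, every $x_j^{(m)}$ with $m$ at least the order of the leader of the $A_i$ having leading variable $x_j$ is, modulo $[A]$ after inverting $h$, a polynomial in the finitely many lower, non-leader variables. Indeed, differentiating $A_i$ yields $s_{A_i}\ell_{A_i}^{(k)}+(\text{lower})$. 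Hence $(R/P)_h$ is generated as a $K$-algebra by finitely many variables together with $1/h$.

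We must pass from $P$ to $I$, and this is the main obstacle. Since $IR_P=PR_P$ and $P$ is finitely generated up to saturation, I would argue as follows. The finitely many elements $A_1,\ldots,A_n$ lie in $IR_P$, so there is $g\notin P$ with $gA_i\in I$ for all $i$. Moreover, since $\sqrt{I}$ has finitely many minimal primes $Q_1=P,\ldots,Q_m$, we may choose $g$ to lie in $Q_2\cap\cdots\cap Q_m$ but not in $P$. Then $D(gh)\cap\Sigma \subset \Sigma_1$, and on $D(gh)$ we have $A_i\in I_{gh}$. Because $I_{gh}$ is a differential ideal (localization of a differential ring is differential), $[A]_{gh}\subseteq I_{gh}\subseteq P_{gh}=\sat_S([A])_{gh}=[A]_{gh}$. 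The final equality holds because $S$ is inverted, so all saturations are absorbed. Thus $I_{gh}=P_{gh}$, and $U=D(gh)=\Spec (R/P)_{gh}$ is reduced, integral, and of finite type by the generator count above. The delicate points are that inverting $h$ really kills saturation, and the bookkeeping that $(R/P)_{gh}$ is finitely generated, which uses that $g$ is a polynomial in finitely many variables.

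\textbf{Step 2: smoothness.} $U$ is an integral scheme of finite type over the characteristic-zero field $K$. Its function field $\kappa(P)$ is therefore separable over $K$, so the smooth locus of $U$ is open and nonempty (generic smoothness for varieties: the Jacobian criterion holds at the generic point). Take $U'$ to be this locus. Note that the open sets need not be preserved by $\partial$; this is not required by the statement. If preservation were needed, one would observe that $D(f)$ for any $f$ inherits a derivation from $R$.
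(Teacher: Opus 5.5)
Your proof is correct. Two of its three steps match the paper's: finite generation of $(R/P)_h$ from a characteristic set via Ritt division, and generic smoothness of an integral finite-type $K$-scheme in characteristic zero.

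Where you genuinely differ is in passing from $P$ to $I$. The paper takes $V=\Spec (R/P)_h$ as an open subset of $(\Sigma_1)_{\red}$. It then asserts that, because $\Sigma_1$ is generically reduced, $(\Sigma_1)_{\red}\to\Sigma_1$ is birational, hence an isomorphism over some nonempty open $W$, and sets $U=W\cap V$. Since $\Sigma$ is not noetherian, that assertion is not automatic. The nilradical need not be finitely generated, so each nilpotent dies on some neighbourhood of $\eta$ without there necessarily being a common one.

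Your argument is exactly what closes this. After inverting $h$, the prime $P_h=[A_1,\ldots,A_n]_h$ is generated as a differential ideal by finitely many elements, so a single $g\notin P$ with $gA_i\in I$ suffices. Because $I_{gh}$ is a differential ideal, you then get $I_{gh}=P_{gh}$ on the explicit basic open $D(gh)$. Choosing $g\in Q_2\cap\cdots\cap Q_m$ also shows explicitly that this open misses the other components, which the paper leaves implicit. Your route costs a few extra lines but gives a self-contained proof with an explicit $U$, using the differential structure of $I$ essentially. The paper's version is shorter but relies on a fact that is standard only for noetherian schemes.

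Two small imprecisions in your generator count need fixing.
\begin{enumerate}
\item The leader $x_j^{(o_j)}$ itself is not eliminated and must be kept as a generator; only the $x_j^{(m)}$ with $m>o_j$ are eliminated.
\item For a non-orderly ranking the ``lower'' variables are infinitely many. Either induct along the well-ordered ranking, or, as the paper does, divide $x_j^{(m)}$ by the whole characteristic set. That remainder is reduced with respect to every $A_i$, so it involves only $x_k^{(l)}$ with $l\le o_k$.
\end{enumerate}
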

In general, it is not clear whether $\Sigma_1$ itself is locally of finite type; our proof produces a particular $U$ that is finite type. 
\begin{proof} 
 Let $R=K\{x_1,\ldots,x_n\}$. Let $P_1 \subseteq K\lbrace x_1,\ldots,x_n\rbrace$ be the prime ideal corresponding to $\Sigma_1$, and let $A_1 = K\lbrace x_1,\ldots,x_n \rbrace/P_1$ so that $\Spec(A_1) = (\Sigma_1)_{\text{red}}$.

 Let $g_1,\ldots,g_n \in P_1$ be a characteristic sequence for $P_1$ and let $m$ be the maximum of the orders of the $g_i$; note that since $\Sigma_1$ has finite Krull dimension, this set consists of $n$ distinct elements, and the remainder upon Ritt division has order at most $m$.
 Let $S$ be the ideal generated by the separants and initials of the characteristic set. 

 Let $K[x_1,\ldots,x_n]^{(m)} =K[x_i^{(j)} : 1\leq i \leq n, 0\leq j \leq m]$.
 We claim that the map $\sigma\colon S^{-1} K[x_1,\ldots,x_n]^{(m)} \to S^{-1}A_1$
 is surjective. It suffices to show that for every $j > m$, $x_i^{(j)}$ is in the image of $\sigma$.
 By Ritt division (see Subsection \ref{subsection:pseudodivision-and-characteristic-sets}), there are elements $s \in S$, $Q_i \in R[\partial] $, and $c \in R$ such that the order of $c$ is at most $m$ and
	\begin{equation*}
	s x_i^{(j)} = Q_1 g_1 + \cdots + Q_n g_n+c. 
 \end{equation*}
 Since the order of $c$ is at most $m$, $c$ is in the image of $\sigma$; since each $Q_i g_i \in P_1$, $Q_i g_i$ is zero in $A_1$, and we conclude that $s x_i^{(j)}$ is also in the image of $\sigma$. Since we have inverted $s$, $x_i^{(j)}$ is in the image of $\sigma$, as desired.
 
 Since $\sigma$ is surjective, $V = \Spec S^{-1}A_1$ is a nonempty open subset of $(\Sigma_1)_{\text{red}}$ finite type. Since $\Sigma_1$ is generically reduced, the map $i\colon (\Sigma_1)_{\text{red}} \to \Sigma_1$ is birational. Let $W \subseteq \Sigma_1$ be a nonempty open subset such that the restriction of $i$ to $W$ is an isomorphism. Then $U = W \cap V$ is the desired nonempty open subset of $\Sigma_1$ such that $U$ is finite type over $K$.
 
Since $U$ is integral and finite type, by \cite[Theorem 21.6.4]{vakil:rising-sea-published} $U$ is generically smooth over $K$, proving the claim about $U'$. 
\end{proof}

\begin{proposition}\label{proposition:local-dimension}
	Let $\Sigma = \Spec K\{x_1,\ldots,x_n\}/[u_1,\ldots,u_n]$ and let $\Sigma_1$ be an irreducible component of $\Sigma$ such that $\Sigma_1$ is generically reduced and has finite Krull dimension.
 Let $\eta_1 \in \Sigma_1$ be the generic point. 
Then $ \Krull(\Sigma_1) = \Krull( T\Sigma_{\eta_1}).$ 
\end{proposition}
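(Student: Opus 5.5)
We will use Proposition~\ref{prop:finite-typeness} to reduce to a statement about smooth schemes of finite type over $K$. Here $T\Sigma_{\eta_1}$ denotes the fiber over $\eta_1$ of $T\Sigma = \underline{\Spec}(\Sym \Omega_{\Sigma/K})$, namely $\Spec \Sym_{\kappa(\eta_1)}(\Omega_{\Sigma/K}\otimes \kappa(\eta_1))$. Its dimension is therefore $\dim_{\kappa(\eta_1)} \Omega_{\Sigma/K}\otimes\kappa(\eta_1)$, so we must show this vector-space dimension equals $\trdeg_K \kappa(\eta_1) = \Krull(\Sigma_1)$.

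The first step is to localize. Formation of $\Omega$ commutes with localization, so $\Omega_{\Sigma/K}\otimes\kappa(\eta_1) = \Omega_{\Ocal_{\Sigma,\eta_1}/K}\otimes \kappa(\eta_1)$. Since $\eta_1$ is the generic point of an irreducible component, $\Ocal_{\Sigma,\eta_1}=\Ocal_{\Sigma_1,\eta_1}$: the local ring at a minimal prime only sees that component, because the other minimal primes $Q_j$ from Proposition~\ref{proposition:minimal-prime-principle} are not contained in $P_1$. Thus $T\Sigma_{\eta_1} = T(\Sigma_1)_{\eta_1} = TU_{\eta_1}$ for any open $U\ni\eta_1$ of $\Sigma_1$.

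The second step uses generic reducedness: $\Ocal_{\Sigma_1,\eta_1}$ is a zero-dimensional local ring, so it is a field, and hence it equals $\kappa(\eta_1)$. The computation therefore reduces to $\Omega_{\kappa(\eta_1)/K}$. Now take the open sets $U'\subseteq U$ from Proposition~\ref{prop:finite-typeness}. The set $U'$ contains $\eta_1$, is smooth over $K$, and is of finite type and integral. For such a scheme, $\Omega_{U'/K}$ is locally free of rank $\dim U' = \trdeg_K\kappa(\eta_1)$. Taking the fiber at $\eta_1$ gives $\dim_{\kappa(\eta_1)}\Omega_{\kappa(\eta_1)/K} = \Krull(\Sigma_1)$. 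As an independent check, in characteristic zero a finitely generated field extension $L/K$ satisfies $\dim_L\Omega_{L/K}=\trdeg_K L$, since $L/K$ is separably generated. Here $\kappa(\eta_1)$ is finitely generated over $K$, because $U$ is of finite type.

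The main obstacle is bookkeeping rather than depth. One must check that the tangent space of the full, possibly non-reduced, non-finite-type scheme $\Sigma$ at $\eta_1$ agrees with that of the finite-type open $U$. Concretely, the K\"ahler differentials of the big ring $K\{x\}/[u]$, localized at $P_1$, must match those of $U$. This follows because both localize to the same local ring $\Ocal_{\Sigma,\eta_1}=\kappa(\eta_1)$, and because $\Omega$ is compatible with localization even for non-noetherian rings.
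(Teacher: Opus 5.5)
Your proposal is correct and follows the paper's argument: Proposition~\ref{prop:finite-typeness} supplies a smooth open $U\ni\eta_1$, the rank of $\Omega_{U/K}$ equals $\Krull(\Sigma_1)$, and compatibility of $\Omega$ with localization identifies the fibers, with your observation that $\Ocal_{\Sigma,\eta_1}=\kappa(\eta_1)$ spelling out the paper's terse ``commutativity of localization'' step. Your ``independent check'' is in fact a complete proof on its own: in characteristic zero $\dim_L\Omega_{L/K}=\trdeg_K L$ holds for every field extension $L/K$, finitely generated or not, so once $\Ocal_{\Sigma,\eta_1}$ is known to be a field, neither Proposition~\ref{prop:finite-typeness} nor finite generation of $\kappa(\eta_1)$ is needed.
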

\begin{proof}
 By Proposition \ref{prop:finite-typeness}, there exists a nonempty open subset $U \subseteq \Sigma_1$ such that $U$ is smooth over $K$. Then 
 \[
\Krull(\Sigma_1) = \Krull(U) = \Krull( TU_{\eta_1}) = \Krull( T(\Sigma_1)_{\eta_1}) = \Krull( T\Sigma_{\eta_1})
\]
where the second equality is true by smoothness of $U$ and the remaining properties about commutativity of various localization operations. 
\end{proof}

\section{Tangent Bundles of Differential Schemes and Linearization}
\label{section:tangent-bundle}
Let $\Sigma = \Spec K\lbrace x_1,\ldots,x_n\rbrace/[u_1,\ldots,u_n]$ and let $\Sigma_1$ be a generically reduced component. 
Since $\Sigma_1$ is generically smooth (Proposition~\ref{prop:finite-typeness}), its dimension is equal to the dimension of its tangent space at its generic point (Proposition~\ref{proposition:local-dimension}). 

In this section we prove that the tangent space $T\Sigma_{\eta}$  of $\Sigma$ at a differential point $\eta$ is isomorphic to the ``perturbation space'' $L[\Sigma,\eta]$ (i.e., the linearization) of our differential equations at $\eta$. 
Corollary~\ref{corollary:tangent-bundle-and-linearization} will show that these are isomorphic.
Moreover, we then show that the Jacobi number of the linear differential equations defining $L[\Sigma,\eta]$ are bounded by the original Jacobi number $J(u_1,\ldots,u_n)$ (Proposition~\ref{proposition:linearizing-jacobi-numbers}). 
Since Ritt proved the linear case of the Jacobi Bound Conjecture \cite{Ritt1935}, putting this all together (Section~\ref{section:main-proof}) will prove that $\Krull \Sigma_1$ is bounded by $J(u_1,\ldots,u_n)$, proving JBC for $\Sigma_1$.

\subsection{Dimension of Tangent Bundles}\label{section:dimension-of-tangent-bundles}
Let $I = [u_1,\ldots,u_n]$, with $u_1,\ldots,u_n\in R = K\{x_1,\ldots,x_n\}$, $A = R/I$, $\Sigma = \Spec A $, and let $\Sigma_1$ be an irreducible component of $\Sigma$ such that $\Sigma_1$ is generically reduced and has finite Krull dimension.
 By Proposition \ref{prop:finite-typeness} there exists a nonempty open set $U \subset \Sigma_1$ such that $U$ is smooth over $K$.

Let $\Omega_{A/K}$ be the $A$-module of K\"{a}hler differentials and let $\Omega^1_{\Sigma/K} = \widetilde{\Omega_{A/K}}$ be the sheaf of relative differentials. 
Since $U$ is smooth, $(\Omega^1_{\Sigma/K})|_U = \Omega^1_{U/K}$ is locally free, and since $U$ is smooth at $\eta$, by \cite[III.10]{Hartshorne1977},
\begin{equation}\label{E:component-dimension}
\Krull \Sigma_1 = \Krull U = \rk \Omega^1_{U/K}.
\end{equation}

 Next, $\Omega^1_{\Sigma/K}$ is a sheaf, but we can equivalently consider the ``physical'' tangent bundle $\pi\colon T\Sigma \to \Sigma$.
 \begin{definition}
 	The \defi{tangent bundle} $T\Sigma$ of $\Sigma$ is defined to be $T\Sigma = \underline{\Spec}(\Sym(\Omega_{A/K}))$
 	where $\underline{\Spec}$ is the global $\Spec$ functor and $\Sym$ is the symmetric algebra.
 \end{definition}

 Since $U$ is smooth over $K$ and $\Omega^1_{\Sigma/K}$ is locally free, by \cite[II, Exercise 5.18]{Hartshorne1977} for any $\eta \in U$,
 \begin{equation}\label{E:tangent-space-dimension}
\rk \Omega^1_{U/K} = \Krull \pi^{-1}(\eta).
 \end{equation}
 
	\begin{remark}\label{remark:other-tangent-spaces}
	Let $\Sigma$ be a $D$-scheme over a field $K$. 
	There are two other definitions of differential tangent spaces which we will not use.
	Both definitions are pointwise and since the important part for us is that the definition of differential tangent space has equations which are the first order perturbations of our non-linear differential equations (see Equation \eqref{equation:linearized}), these definitions are equivalent to ours at the points to which they apply.
	
	For a scheme $X$ the \defi{Zariski tangent space} at a point $x\in X$ is defined to be the $\kappa(x)$-vector space dual of $m_x/m^2_x$. 
	Kolchin takes the Zariski tangent space approach for the tangent space at $\Sigma$ and arrives at the same formulas \eqref{equation:linearized}. 
	See \cite[pg 198]{Kolchin1973}. This treatment also appears in \cite[p.5 before Fact 1.1]{Pillay2003} 
	We note that to give a point $\Spec(F[\varepsilon]/\langle \varepsilon^2 \rangle) \to X$ is equivalent to giving a point $x \in X(F)$ with $\kappa(x)=F$ and an element of $T_x$ \cite[Ch II, Problem 2.8]{Hartshorne1977}. 
	
	Yet another approach is to define the tangent space of $x$ at a point to be the derivations of $\Ocal_{X,x}$. 
	This is undertaken in \cite[Ch VII, Section 2, pg 197]{Kolchin1986}.
	At the bottom of \cite[p 198]{Kolchin1986} they arrive again at \eqref{equation:linearized} from this viewpoint. 
	The equations also appear in \cite[Ch V, Section 20, pg 334]{Kolchin1986}.
\end{remark}

\subsection{Linearizations of Schemes In Countably Many Variables}
In \ref{section:dimension-of-tangent-bundles} we defined the tangent bundle of $\Sigma\to \Spec(K)$ using a symmetric algebra. 
An alternative and equivalent way to define $T\Sigma$ is via first order perturbations of differential equations,
given at the functor of points level as 
$$ T\Sigma(B) = \Sigma(B[\varepsilon]/(\varepsilon^2),\partial) $$
where $B$ is a $K$-algebra.
We denote this functor by $L[\Sigma]$ and show in Proposition~\ref{proposition:isomorphism-with-symmetric-algebra} that it coincides with $T\Sigma$.

Switching temporarily from differential to commutative algebra, let $B = K[x_1,x_2,\ldots]$ be a polynomial ring with countably many variables, let $J \subseteq B$ be an ideal, let $C = B/J$, and let $X = \Spec C$. For $u \in J$, define $L[u]$ to be
 \begin{equation}
 \label{equation:linearization-definition}
 L[u]:=\sum_i \dfrac{\partial u}{\partial x_i}y_i \in C[y_1,y_2,\ldots]
 \end{equation}
where $C[y_1,y_2,\ldots]$ has a new variable $y_i$ for each previous variable $x_i$.
\begin{remark}
	Equation \ref{equation:linearization-definition} is what one obtains from plugging in $x + \varepsilon y$ into $u$, assuming that $\varepsilon^2=0$ and that $x$ is a solution of $u$, and then collecting the $\epsilon$ terms. 
	In other words, \ref{equation:linearization-definition} are the equations one obtains from first order perturbations. 
\end{remark}
For $\eta \in X$, corresponding to a prime ideal $P$, define $L[u,\eta]$ to be the image of $L[u]$ in the field $\kappa(\eta) = \Frac(B/P)$ (i.e., ``evaluate $L[u]$ at $\eta$''); this is equal to 
 \[
 L[u, \eta]:=\sum_i \dfrac{\partial u}{\partial x_i}(\eta)y_i \in \kappa(\eta)[y_1,y_2,\ldots].
 \]
 Define $L[J]$ to be the ideal $\langle L[u] : u \in J \rangle$, and define $L[J,\eta]$ to be the ideal $\langle L[u,\eta] : u \in J\rangle$.
 
 \begin{definition}
 	Let $B = K[x_1,x_2,\ldots]$ be a polynomial ring with countably many variables.
 	Let $J \subseteq B$ be an ideal and let $C = B/J$.
 	Let $X = \Spec(C)$. 
 	We define $L[X]$ to be $\Spec C [y_1,y_2,\ldots] / L[J]$, and $L[X,\eta]$ to be $\Spec \kappa(\eta) [y_1,y_2,\ldots] / L[J,\eta]$.
 \end{definition} 
 
 \begin{proposition}\label{proposition:isomorphism-with-symmetric-algebra}

 	Let $B = K[x_1,x_2,\ldots]$ be a polynomial ring with countably many variables.
 	Let $J \subseteq B$ be an ideal and let $C = B/J$.
 The map $dx_i^{(j)} \mapsto y_i^{(j)}$ defines an isomorphism $\Sym(\Omega_{C/K}) \to C [y_1,y_2,\ldots] / L[J]$ of $C$-algebras.
 \end{proposition}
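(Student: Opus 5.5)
The plan is to build the isomorphism from the standard presentation of Kähler differentials of a quotient of a polynomial ring. This is the conormal exact sequence, and it holds verbatim in infinitely many variables. The notation $dx_i^{(j)}\mapsto y_i^{(j)}$ in the statement just indexes the countably many variables of $B$ doubly; we keep writing them $x_1,x_2,\ldots$.

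First, $\Omega_{B/K}$ is the free $B$-module on the symbols $dx_i$. This follows from the universal property. A $K$-derivation $D\colon B\to M$ is determined by the values $D(x_i)$, which can be arbitrary, because every element of $B$ is a polynomial in finitely many variables. So no finiteness is needed.

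Next, apply the conormal sequence for $C=B/J$:
\[ J/J^2 \xrightarrow{\ \delta\ } \Omega_{B/K}\otimes_B C \to \Omega_{C/K}\to 0, \qquad \delta(u)=du=\sum_i \frac{\partial u}{\partial x_i}\,dx_i . \]
This is right exact for any surjection of $K$-algebras (Matsumura, or the standard argument via the universal property). Each sum $\sum_i$ is finite, since $u$ involves only finitely many variables. It follows that $\Omega_{C/K}$ is the free $C$-module $\bigoplus_i C\,dx_i$ modulo the submodule $N$ generated by the images of $du$ for $u\in J$. Note that $N$ is already generated by $du$ for $u$ ranging over generators of $J$. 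Indeed, $d(bu)=b\,du+u\,db$, and $u\,db$ vanishes after tensoring with $C$. This remark is not needed for the statement as phrased, since $L[J]$ is defined using all $u\in J$.

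Then apply $\Sym_C$. The symmetric algebra of a free module $\bigoplus_i C\,dx_i$ is the polynomial ring $C[y_1,y_2,\ldots]$ via $dx_i\mapsto y_i$. For a quotient $F/N$ of a module, $\Sym(F/N)\cong \Sym(F)/N\cdot\Sym(F)$. This holds because $\Sym$ is left adjoint to the forgetful functor from commutative $C$-algebras to $C$-modules, so it preserves cokernels. Under $dx_i\mapsto y_i$, the generator $du$ maps to exactly $L[u]=\sum_i \frac{\partial u}{\partial x_i} y_i$, where the coefficients are read in $C$. Hence the ideal generated by $N$ is $L[J]$, and we obtain the isomorphism of $C$-algebras $\Sym(\Omega_{C/K})\cong C[y_1,y_2,\ldots]/L[J]$.

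The only point needing care is checking that the conormal sequence and the freeness of $\Omega_{B/K}$ survive the passage to countably many variables. I expect to handle this either directly through universal properties, as above, or by writing $B$ as a filtered colimit of finite polynomial rings. Kähler differentials, $\Sym$, and quotients all commute with filtered colimits, so the finite-variable statement passes to the limit.
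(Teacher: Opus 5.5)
Your proposal is correct and follows the same route as the paper. The paper invokes $\Omega_{C/K}\cong\Omega_{B/K}/\Omega_{J/K}$, which is the conormal presentation with $\Omega_{J/K}$ meaning the submodule generated by $dJ$, and then checks that $du=\sum_i \frac{\partial u}{\partial x_i}\,dx_i\mapsto L[u]$. You also spell out what the paper leaves implicit: that $\Omega_{B/K}$ is free in countably many variables, that the conormal sequence is right exact with no finiteness hypotheses, and that $\Sym(F/N)\cong\Sym(F)/N\cdot\Sym(F)$.
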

 
 \begin{proof}
 The elements $dx_i^{(j)}$ and $y_i$ generate $\Sym(\Omega_{C/K})$ and $C [y_1,y_2,\ldots] / L[J]$ (respectively) as $C$-algebras.
 We claim that this map matches up the relations. Since $\Omega_{C/K} \cong \Omega_{B/K} / \Omega_{J/K}$
 \[
 \Omega_{C/K} \cong \Omega_{B/K} / \Omega_{J/K},
 \]
 it suffices to observe that for $u \in J$,
 \[
 du = \sum_i \dfrac{\partial u}{\partial x_i}dx_i \mapsto \sum_i \dfrac{\partial u}{\partial x_i}y_i = L(u).
 \]
\end{proof}

\subsection{Linearizations of $D$-Schemes}

Now consider a differential ring of the form $C = K\lbrace x_1,\ldots,x_n \rbrace/I$, i.e.~differential $(K,\partial)$-algebras which are $\partial$-finite type over $K$.
This gives the following.

 \begin{corollary}\label{corollary:tangent-bundle-and-linearization}
Let $\Sigma$ be an affine $D$-scheme over $\Spec(K)$ which is $\partial$-finite type. 
The schemes $T\Sigma \to L[\Sigma]$ are isomorphic as $\Sigma$-schemes: there exists a functorial isomorphism $T\Sigma \to L[\Sigma]$ such that diagram
\[
\xymatrix{
T\Sigma \ar[dr]_{\pi} \ar[rr] &&L[\Sigma] \ar[dl]^{\nu}\\
&\Sigma&}
\]
commutes. In particular, $T\Sigma_{\eta}=\pi^{-1}(\eta)\cong \nu^{-1}(\eta)=L[\Sigma]_{\eta}$ and $\Krull T\Sigma_{\eta} = \Krull L[\Sigma]_{\eta}$ for all $\eta \in \Sigma$. 
\end{corollary}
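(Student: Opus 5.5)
The plan is to deduce the corollary directly from Proposition~\ref{proposition:isomorphism-with-symmetric-algebra}, regarding $K\lbrace x_1,\ldots,x_n\rbrace$ as a polynomial ring in the countably many variables $x_i^{(j)}$. Write $\Sigma = \Spec C$ with $C = K\lbrace x_1,\ldots,x_n\rbrace/I$ and $I$ a differential ideal. As a commutative $K$-algebra this is exactly the situation $B = K[x_i^{(j)}]$, $J = I$ of the previous subsection; the differential structure on $C$ plays no role in forming $\Omega_{C/K}$ or $L[I]$. Proposition~\ref{proposition:isomorphism-with-symmetric-algebra} then gives an isomorphism of $C$-algebras
\[
\Sym(\Omega_{C/K}) \xrightarrow{\ \sim\ } C[y_i^{(j)}]/L[I], \qquad dx_i^{(j)} \mapsto y_i^{(j)}.
\]
Since $\Sigma$ is affine, $\underline{\Spec}$ is just $\Spec$. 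Applying $\Spec$ yields an isomorphism $T\Sigma \to L[\Sigma]$. It commutes with the projections $\pi$ and $\nu$ to $\Sigma$ because it is a map of $C$-algebras, so the structure maps $C \to \Sym(\Omega_{C/K})$ and $C \to C[y]/L[I]$ are intertwined. Functoriality in $\Sigma$ follows because both constructions, and the map $dx \mapsto y$, commute with $C$-algebra maps induced by morphisms of presentations. This is the chain rule applied to $L[u]$.

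For the fibre statement, I would base change the isomorphism along $C \to \kappa(\eta)$ for $\eta \in \Sigma$ with prime $P$. One gets $\pi^{-1}(\eta) = \Spec \Sym(\Omega_{C/K}) \otimes_C \kappa(\eta)$. On the other side,
\[
\bigl(C[y]/L[I]\bigr)\otimes_C \kappa(\eta) \cong \kappa(\eta)[y]/L[I,\eta],
\]
since tensoring a quotient by an ideal generated by the $L[u]$ produces the quotient by their images $L[u,\eta]$. That is exactly $L[\Sigma]_\eta$ by definition. Isomorphic $\kappa(\eta)$-schemes have equal dimension in the transcendence degree sense fixed in Section~\ref{section:notation}, which gives $\Krull T\Sigma_\eta = \Krull L[\Sigma]_\eta$.

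I expect no serious obstacle. The one point needing care is bookkeeping: $L[I]$ is generated by $L[u]$ for \emph{all} $u \in I$, not only the derivatives of the $u_i$. In the proposition, though, $L[J]$ is defined with all $u \in J$, so nothing further is needed here. The fact that $L[I]$ is generated by the linearizations of the $\partial^k u_i$ alone follows from the Leibniz rule $L[fg] = fL[g] + gL[f]$, whose second term vanishes modulo $I$ when $g \in I$. That fact will matter later for the Jacobi number comparison, and I would record it here as a remark.
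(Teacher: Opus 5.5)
Your proposal is correct and follows the paper's route: the paper gives no separate proof, only the remark that $C = K\lbrace x_1,\ldots,x_n\rbrace/I$ is a quotient of a polynomial ring in the countably many variables $x_i^{(j)}$, so that Proposition~\ref{proposition:isomorphism-with-symmetric-algebra} applies directly. Your extra steps supply what the paper leaves implicit: applying $\Spec$ to a $C$-algebra isomorphism gives compatibility with $\pi$ and $\nu$, and base change along $C \to \kappa(\eta)$, using right exactness of $\otimes_C$, identifies $\nu^{-1}(\eta)$ with $\Spec \kappa(\eta)[y]/L[I,\eta]$.
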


Let $\eta$ be a point of $\Sigma=\Spec(K\lbrace x_1,\ldots,x_n\rbrace/I)$. Then specializing gives an isomorphism
\begin{equation}\label{equation:linearization-at-point}
L[\Sigma]_{\eta} \cong \Spec \kappa(\eta)\{y_1,\ldots,y_n\} /L[I,\eta].
\end{equation}

We pause for a moment to record a special form of \eqref{equation:linearization-definition} in the case of differential polynomials and differential points. 
Let $u \in K[ x_1,\ldots,x_n]^{(r)}=K[x_i^{(j)} \colon 1\leq i \leq n, 0 \leq j \leq r]$. 
Write $x=(x_1,\ldots,x_n)$. 
Let $\eta \in \Sigma$ be a differential point. 
Let $y=(y_1,\ldots,y_n)$ be a new set of differential indeterminates. 
We have $$0=u(\eta+\varepsilon y) = u(\eta) + \varepsilon \dfrac{\partial u}{\partial \nabla^r x}(\eta) \cdot \nabla^r(y) = \varepsilon \dfrac{\partial u}{\partial \nabla^r x}(\eta) \cdot \nabla^r(y),$$
where 
\begin{align*}
	\nabla^r(x) &= (x_1,\dot{x}_1,\ldots, x_1^{(r)}, x_2, \dot{x}_2,\ldots, x_2^{(r)},\ldots, x_n,\dot{x}_n, \ldots, x_n^{(r)}),\\
	\dfrac{\partial u}{\partial \nabla^r x} &= \left( \dfrac{\partial u}{\partial x_1}, \dfrac{\partial u}{\partial \dot{x}_1},\ldots,\dfrac{\partial u}{\partial x^{(r)}_1 },\dfrac{\partial u}{\partial x_2}, \dfrac{\partial u}{\partial \dot{x}_2},\ldots,\dfrac{\partial u}{\partial x^{(r)}_2 },\ldots, \dfrac{\partial u}{\partial x_n}, \dfrac{\partial u}{\partial \dot{x}_n},\ldots,\dfrac{\partial u}{\partial x^{(r)}_n } \right),
\end{align*}
which implies the linearized equations take the form 
\begin{equation}\label{equation:linearized}
	\dfrac{\partial u}{\partial \nabla^r x}(\eta) \cdot \nabla^r(y) =0.
\end{equation}
We then have $L[u,\eta]:=\dfrac{\partial u}{\partial \nabla^r x}(\eta) \cdot \nabla^r(y)$.

We now look at linearizations of differential ideals. 
Let $\eta \in \Sigma =\Spec K\lbrace x_1,\ldots,x_n\rbrace/[u_1,\ldots,u_n]$ be a differential point.
Let $I = [u_1,\ldots,u_n]$ be the defining ideal of $\Sigma$.
We know that the linearization of the differential ideal $I$ is the differential ideal of the linearizations:
\begin{equation}\label{equation:linear-equations}
L[I,\eta] = [L[u_1,\eta],\ldots,L[u_n,\eta]].
\end{equation}
Indeed, $L[\Sigma]_{\eta}$ is a differential scheme: if $u \in I$, then differentiating Equation \ref{equation:linearization-definition} gives 
\begin{equation}\label{equation:differential-structure}
	L[u,\eta]' = L[u',\eta].
\end{equation}

\begin{remark}
Although we do not logically need this observation to prove our theorem, we can upgrade the isomorphism in Proposition~\ref{proposition:isomorphism-with-symmetric-algebra} to a morphism of differential rings using $D$-module structure on $\Omega_{A/K}$ and the $D$-module structure from \eqref{equation:differential-structure}.
The $D$-module structure on $\Omega_{A/K}$ is not new and appears in \cite{Johnson1969} and \cite[section 6]{Ollivier2022}.

Let $\Sigma = \Spec(A)$ where $A = K\lbrace x_1,\ldots,x_n\rbrace/I$ where $I$ is a differential ideal. 
The fact that $T\Sigma \to \Sigma$ is a morphism of $D$-schemes follows from Proposition~\ref{proposition:isomorphism-with-symmetric-algebra} and the fact that $\Sym(\Omega_{A/K})$ is a differential $K$-algebra. 
We identify $L[f]$ with $d f$ and use that $\partial(d f) = d(\partial(f))$ with \eqref{equation:differential-structure}.

Explicitly, in the case $I=0$ we have 
$$\Sym(\Omega_{K\lbrace x_1,\ldots,x_n \rbrace/K}) = K\lbrace x_1,\ldots,x_n \rbrace[dx_i^{(j)} : 1\leq i \leq n, j\geq 0].$$
The module $\Omega_{K\lbrace x_1,\ldots,x_n \rbrace/K}$ inherits the structure of a $D$-module where $\partial df = d \partial f$ for all $f \in K\lbrace x_1,\ldots,x_n\rbrace$.
In the case $\Sigma = \Spec K\lbrace x_1,\ldots,x_n\rbrace$ one has $T\Sigma = \Spec K\lbrace x_1,\ldots,x_n \rbrace \lbrace y_1,\ldots,y_n \rbrace$ where we can think of $y=(y_1,\ldots,y_n)$ as the extra variables appearing in the perturbation expansion $x+\varepsilon y$. 
The isomorphism $\Sym(\Omega_{K\lbrace x \rbrace/K}) \to K\lbrace x \rbrace \lbrace y \rbrace$ is given by $dx_i^{(j)} \mapsto y_i^{(j)}$ and it is an isomorphisms of $\partial$-algebras.

The case $I\neq 0$ can be made similarly explicit.
We have 
\begin{align*}\label{E:tangent-bundle-ideal}
	\Sym(\Omega_{A/K}) &= A[dx_i,d\dot{x}_i,d\ddot{x}_i,\ldots \colon 1\leq i \leq n]/[ du_i \colon 1\leq i \leq n ]\\
	&\cong K\lbrace x \rbrace \lbrace y \rbrace/\left[ I, \dfrac{\partial u}{\partial \nabla^{\infty} x} \cdot \nabla^{\infty}(y) \colon u \in I\right],
\end{align*}
and the isomorphism of $A$-algebras is given by $dx^{(r)}_i \mapsto y_i^{(r)}$.
One sees that this morphism passes to the level of quotients since $du = \dfrac{\partial u}{\partial \nabla^{\infty} x} \cdot d\nabla^{\infty}(x)$. 
This is visibly a bijection of ideals are and hence the morphism is an isomorphism of differential $A$-algebras. 
\end{remark}

\subsection{Linearization and Jacobi Bounds}

We now give two examples of differential schemes $\Sigma$ of finite Krull dimension and a point $\eta \in \Sigma$ such that $L[\Sigma,\eta]$ is infinite dimensional.
In Example~\ref{example:degenerate-linearization-01} the component is not generically reduced. In Example~\ref{example:degenerate-linearization-02} the point is not generic and the underlying component is generically reduced. 
\begin{example}\label{example:degenerate-linearization-01}
 Continuing with Example \ref{example:really-big-point}, let $A = K\{x\}/[x^2]$ and let $\eta$ be the generic point of the reduction $\Sigma_{\red} \cong \Spec K$ (given by setting each $x^{(i)} = 0$). 
 Then $L[x^2,\eta] = \frac{\partial u}{\partial x}(\eta) y = \left(2x \vert_{\eta} \right)y = (0)y = 0$, and $L[\Sigma]_{\eta} = \Spec K\lbrace y \rbrace$. In particular, the tangent bundle of $\Sigma$ is an infinite dimensional affine space.
\end{example}

\begin{example}\label{example:degenerate-linearization-02}
	Consider the system of equations in two differential variables $x,y$ given by $u_1=y^2-x^3=0$ and $u_2=x'=0$. 
	The Jacobi number of this system is one. 
	Also the scheme $\Spec K\lbrace x,y\rbrace/[ y^2-x^3, x']$ is irreducible but not reduced.
	To see this observe that $2yy'-3x^2x'=0$ implies $2yy'=0$ implies $yy'=0$.
	Taking another derivative gives $(y')^2 + yy''=0$.
	Multiplying by $y'$ gives $(y')^3=0$ (using the relation $yy'=0$). 
	After taking radicals we find that $y' \in \sqrt{ [y^2-x^3,x'] }$.
	Hence 
	$$K\lbrace x,y\rbrace/\sqrt{[y^2-x^3,x']} \cong K[x,y]/\langle y^2-x^3\rangle,$$ 
	which has Krull dimension one.
	
	After linearizing using $x=x_0+\varepsilon x_1$ and $y=y_0+\varepsilon y_1$ at $\eta=(x_0,y_0)=(0,0)$ we find that $L[u_1,\eta]=0$ and $L[u_2,\eta]=x_1'$. 
	This linearized system has infinite Krull dimension and differential dimension one. 
\end{example}

Just taking powers of differential polynomials is another straightforward way to produce examples of differential schemes which are not generically reduced. Indeed, if $I = [u_1,\ldots,u_n]$ is a differential ideal, then $J = [u_1^{a_1},\ldots,u_n^{a_1}]$ is a non-generically reduced differential ideal with $\sqrt{I} = \sqrt{J}$.) In such examples the non-reducedness is ``visible'', and its clear one should work with $I$ rather than $J$.
In other words, if one starts with the ideal $J$, then the radical is computed by first taking roots of the generators, and  $J(u_1,\ldots,u_n)  \leq J(u_1^{a_1},\ldots,u_n^{a_1})$.

In general, ``removing'' the non-reducedness is not straightforward, as the following example shows. 
\begin{example}\label{example:non-obvious}
  Let $J = \langle g_1, g_2, g_3 \rangle = \langle s^2 + tu, t^2 + us, u^2 + st\rangle  \subset K[s,t,u]$ and consider the (non differential) scheme $\Spec K[s,t,u]/J$. Projectively, these three conics do not intersect, so as an affine scheme this is a single non reduced point. Every element is nilpotent, but to exhibit this explicitly for e.g.~the variable $s$, one observes the syzygy
  \[
2s^4 = (2s^2 - tu)g_1 +u^2g_2 - sug_3 \in J;
\]
by construction no element of $J$ has degree 1, so $s \not \in J$.

From here, one can form a non-reduced differential scheme by substitution. For example, let $f_1,f_2,f_3 \in K\{x_1,x_2,x_3\}$ be any differential polynomials, and let $u_i = g_i(f_1,f_2,f_3)$. Then the differential scheme given by the ideal $I = [u_1,u_2,u_3]$ is not generically reduced. Moreover, to compute the radical of $I$, one first must acknowledge that $\sqrt{I} \supset [f_1,f_2,f_3]$. The $f_i$'s were arbitrary; so sometimes we will have $\sqrt{I} = [f_1,f_2,f_3]$, but often $\sqrt{I}$ will be larger and require more than three generators. This poses substantial problems:
  \begin{enumerate}
  \item if $\sqrt{I}$ is not minimally generated by three elements, we are no longer in the setting of the Jacobi bound conjecture; but worse,
  \item even if $\sqrt{I} = [f_1,f_2,f_3]$, one generally does not know any relationship between $J(f_1,f_2,f_3)$ and $J(u_1,u_2,u_3)$.
  \end{enumerate}
\end{example}

Below we use generic smoothness (Proposition~\ref{proposition:local-dimension}) to show that the sort of situation in Examples~\ref{example:degenerate-linearization-01}, \ref{example:degenerate-linearization-02}, and \ref{example:non-obvious} can't happen at generic points of generically reduced components. 
\\ 

First we show that, whether or not the linearization is infinite dimensional, its Jacobi number cannot increase.

\begin{proposition}[Jacobi Bound is non-increasing under linearization]\label{proposition:linearizing-jacobi-numbers}
	Let $\widehat{K}$ be a differential closure of $K$. Then the following are true.
	\begin{enumerate}
		\item Let $u\in K\lbrace x_1,\ldots,x_n \rbrace$. If $\eta\in \widehat{K}^n$ satisfies $u(\eta)=0$ then $\ord_{x_j}^{\partial} L[u,\eta] \leq \ord_{x_j}^{\partial}(u)$ for all $x_j$.
		\item Let $u_1,\ldots,u_n \in K\lbrace x_1,\ldots,x_n\rbrace$. If $\eta \in \widehat{K}^n$ satisfies $u_1(\eta)=u_2(\eta)=\cdots=u_n(\eta)=0$ then 
		$$ J(L[u_1,\eta], L[u_2,\eta],\ldots,L[u_n,\eta]) \leq J(u_1,\ldots,u_n).$$
		Furthermore, on a Kolchin open subset $U$ of $V([u_1,\ldots,u_n])$ we have 
		$$J(L[u_1,\eta], L[u_2,\eta],\ldots,L[u_n,\eta]) = J(u_1,\ldots,u_n),$$
		for all differential points $\eta \in U$.
	\end{enumerate}
\end{proposition}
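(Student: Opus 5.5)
Part (1) follows directly from the explicit form \eqref{equation:linearized}. Write $r_j=\ord^{\partial}_{x_j}(u)$. By definition, $\partial u/\partial x_j^{(k)}=0$ as a differential polynomial for every $k>r_j$. Hence in
\[
L[u,\eta]=\sum_{j=1}^n\sum_{k\ge 0}\frac{\partial u}{\partial x_j^{(k)}}(\eta)\,y_j^{(k)}
\]
the coefficient of $y_j^{(k)}$ vanishes identically for $k>r_j$, and so $\ord^{\partial}_{y_j}L[u,\eta]\le r_j$. Evaluation at $\eta$ can only kill further coefficients, never create new ones. The convention $\max^+$ (order $0$ when $x_j$ is absent) causes no trouble, since the orders of $L[u,\eta]$ are computed with the same convention. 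Note also that we identify $x_j$ with $y_j$ when comparing orders.

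For the inequality in part (2), I apply part (1) termwise. For each $\sigma\in S_n$,
\[
\sum_i\ord_{y_i}L[u_{\sigma(i)},\eta]\le\sum_i\ord_{x_i}u_{\sigma(i)}\le J(u_1,\ldots,u_n).
\]
Taking the maximum over $\sigma$ gives $J(L[u_1,\eta],\ldots,L[u_n,\eta])\le J(u_1,\ldots,u_n)$.

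For the equality on a Kolchin open set, fix a permutation $\sigma$ attaining the maximum in $J(u_1,\ldots,u_n)$. For each $i$ put $r_i=\ord_{x_i}u_{\sigma(i)}$, and whenever $r_i>0$ (or $x_i$ genuinely appears) let $s_i=\partial u_{\sigma(i)}/\partial x_i^{(r_i)}$. This is a nonzero differential polynomial. If $s_i(\eta)\neq0$, then $\ord_{y_i}L[u_{\sigma(i)},\eta]=r_i$. So on the set $U=V([u_1,\ldots,u_n])\setminus V(\prod_i s_i)$ the sum for $\sigma$ is preserved. Combined with the inequality above, this forces equality there. The set $U$ is the complement of a Kolchin closed set, hence Kolchin open in $V([u_1,\ldots,u_n])$. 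In the degenerate case where $x_i$ does not appear in $u_{\sigma(i)}$, order $0$ is automatic on both sides, so no condition is needed.

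The main subtlety is the equality statement: $U$ is only asserted to be Kolchin open, not nonempty. Indeed, a separant can vanish on an entire component, as in Example~\ref{example:degenerate-linearization-02}, so I will not claim density. I would add a remark that nonemptiness on a given component is exactly the ``good linearization'' condition. One further point needs care: evaluating at $\eta\in\widehat K^n$ must be compatible with differentiation, and this uses the fact that $\eta$ is a differential point.
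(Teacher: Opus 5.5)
Your proof is correct. Part (1) and the inequality in part (2) follow the paper's argument exactly: the coefficient of $y_j^{(k)}$ in $L[u,\eta]$ is $\frac{\partial u}{\partial x_j^{(k)}}(\eta)$, evaluation can only kill coefficients, and one then maximizes termwise over $S_n$.

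The equality clause is where you diverge. The paper takes $U$ to be the set of $\eta\in V([u_1,\ldots,u_n])$ at which \emph{every} leading partial $\partial u_i/\partial x_j^{(a_{ij})}$, with $a_{ij}=\ord^{\partial}_{x_j}(u_i)$, is nonzero. On that set the whole order matrix survives linearization, and hence so does every permutation sum. You instead fix one maximizing $\sigma$, require nonvanishing only along it, and finish by squeezing against the inequality already proved. The paper's choice does not depend on $\sigma$ and gives the stronger conclusion that all orders are preserved, but on a smaller set. Yours gives a larger open set with exactly the claimed conclusion.

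Your version also handles a degenerate case that the paper's description glosses over. Under the $\max^{+}$ convention of \eqref{equation:order}, if $x_j$ does not occur in $u_i$ then $a_{ij}=0$ while $\partial u_i/\partial x_j\equiv 0$. In that situation the paper's condition, read literally, makes $U$ empty. Your observation that order $0$ needs no condition avoids this. For the same reason, the case $r_i=0$ with $x_i$ present also needs no condition, so your parenthetical ``or $x_i$ genuinely appears'' only shrinks $U$.

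Two side remarks are inaccurate, though neither affects the proof.

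First, Example~\ref{example:degenerate-linearization-02} does not show a relevant partial vanishing on a whole component. Along the maximizing assignment the partials are $\partial u_2/\partial x'=1$ and $\partial u_1/\partial y=2y$, and $2y$ is nonzero at the generic point. Even at $\eta=(0,0)$ the linearized pair $(0,x_1')$ still has Jacobi number $1$; the degeneracy there is infinite Krull dimension, not a drop in $J$. A genuine instance is $u=(x')^2$ with $n=1$: there $2x'$ vanishes on all of $V([u])$ and $J$ drops from $1$ to $0$.

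Second, this proposition never uses compatibility of evaluation with $\partial$. The order comparison only asks which coefficients $\frac{\partial u}{\partial x_j^{(k)}}(\eta)$ vanish. The differential structure matters for \eqref{equation:differential-structure}, not here.
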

\begin{proof}
	In what follows we let $\Max^{+}$ denote the maximum of a finite set and zero.
	For $K\lbrace x_1,\ldots,x_n \rbrace$ we have a formula for the order given by 
	$$\ord_{x_j}^{\partial}(u) = \Max^+\left\lbrace r \in \ZZ_{\geq 0} \colon \frac{\partial u}{\partial x_j^{(r)}} \neq 0 \right\rbrace.$$	
	From the formula for linearization \eqref{equation:linearized} we have 
	$$\ord_{y_j}^{\partial}(L[u,\eta]) = \Max^+\left\lbrace r \in \ZZ_{\geq 0} \colon \frac{\partial u}{\partial x_j^{(r)}}(\eta) \neq 0 \right\rbrace.$$
	This implies that $\ord^{\partial}_{x_j}(u) \geq \ord^{\partial}_{y_j}(L[u,\eta])$. 
	This holds for $1\leq j \leq n$. 
	We now have 
	\begin{align*}
		J(L[u_1,\eta], L[u_2,\eta],\ldots,L[u_n,\eta]) =& \Max_{\sigma \in S_n} \sum_{j=1}^n \ord^{\partial}_{y_j}( L[u_{\sigma(j)},\eta] )\\
		\leq & \Max_{\sigma \in S_n} \sum_{j=1}^n \ord^{\partial}_{x_j}( u_{\sigma(j)} ) \\
		=& J(u_1,\ldots,u_n).
	\end{align*}
	The Kolchin open subset where the Jacobi number doesn't drop after linearization is the collection of differential points $\eta \in V([u_1,\ldots,u_n])$ such that $\frac{\partial u_i}{\partial x_j^{(a_{ij})}}\neq 0$ where $a_{ij} = \ord_{x_j}^{\partial}(u_i)$.
\end{proof}

\section{Proof of Main Theorem}\label{section:main-proof}
We now prove Theorem~\ref{T:main}.
\\

Let $\Sigma = \Spec(K\lbrace x_1,\ldots,x_n\rbrace/[u_1,\ldots,u_n])$. 
Let $\Sigma_1 \subset \Sigma$ be a generically reduced component and let $\eta \in \Sigma_1$ be its generic point. 
From Section \ref{section:tangent-bundle}, equations \eqref{E:component-dimension} and \eqref{E:tangent-space-dimension}, 
since $\Sigma_1$ is generically smooth, we have 
$$
\Krull(T\Sigma_{\eta}) = \Krull(\Sigma_1).
$$
We have $T\Sigma_{\eta} \cong L[\Sigma,\eta]$ by Corollary~\ref{corollary:tangent-bundle-and-linearization}. 
Now $L[\Sigma,\eta]$ is a linear differential variety and by equation \eqref{equation:linear-equations} its defining ideal is $[L[u_1,\eta],\ldots,L[u_n,\eta]]$. 
$$
\Krull(L[\Sigma,\eta]) \leq J(L[u_1,\eta],\ldots,L[u_n,\eta]).
$$
By Proposition \ref{proposition:linearizing-jacobi-numbers}, $J(L[u_1,\eta],\ldots,L[u_n,\eta]) \leq J(u_1,\ldots,u_n)$, and we conclude that 
\[
\Krull(\Sigma_1) \leq J(u_1,\ldots,u_n).
\]

\section{Applications}\label{S:jbc-examples}

We now reprove \cite[Theorem 23]{Li2016} using our linearization technique. This is the Jacobi bound conjecture for characteristic sequences.
In this case we get equality.
\begin{corollary}\label{C:characteristic-sets}
	Let $R=K\lbrace x_1,\ldots,x_n\rbrace$.
	Let $P \subset R$ be a prime differential ideal with $\trdeg^{\partial}_K(R/P)<\infty$. 
	If $(g_1,\ldots,g_n)$ is a characteristic set for $P$ with respect to some ranking $\prec$, then 
	$$ \trdeg_K^{\partial}(R/P) = J(g_1,\ldots,g_n). $$
\end{corollary}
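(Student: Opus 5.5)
The plan is to prove the two inequalities separately, reducing both to a combinatorial statement about the orders of the $g_i$ in their leading variables. Write $t_i$ for the leading variable of $g_i$ and $r_i=\ord^{\partial}_{t_i}(g_i)$, so that $\ell_{g_i}=t_i^{(r_i)}$. Since $\trdeg_K(R/P)<\infty$, the remark after Theorem~\ref{theorem:characteristic-sets-exist} shows that the $t_i$ are exactly $x_1,\ldots,x_n$ in some order. After renumbering we assume $t_i=x_i$. Throughout, $\trdeg^{\partial}_K(R/P)$ is read as the (absolute) transcendence degree $\trdeg_K \kappa(P)$, in keeping with Corollary~\ref{corollary:characteristic-sets}.

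\emph{Step 1: $J(g_1,\ldots,g_n)=r_1+\cdots+r_n$.} Autoreducedness says each $g_j$ with $j\neq i$ is Ritt-reduced with respect to $g_i$. In particular $x_i^{(s)}$ does not occur in $g_j$ for $s>r_i$, so $\ord^{\partial}_{x_i}(g_j)\leq r_i=\ord^{\partial}_{x_i}(g_i)$. Hence the order matrix $a_{ji}=\ord^{\partial}_{x_i}(g_j)$ is dominated column-wise by its diagonal. For every $\sigma\in S_n$ this gives $\sum_i a_{\sigma(i)i}\leq\sum_i a_{ii}$, so the identity permutation attains the maximum in Definition~\ref{definition:jacobi-number}. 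The $\Max^+$ convention is harmless here because all $r_i\geq 0$.

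\emph{Step 2: $\trdeg_K\kappa(P)=\sum r_i$.} Let $T=\{x_i^{(j)}\colon 1\leq i\leq n,\ 0\leq j<r_i\}$, a set of $\sum_i r_i$ elements.

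First, $T$ is algebraically independent modulo $P$. A nonzero polynomial $f\in K[T]$ is Ritt-reduced with respect to every $g_i$, so Ritt division leaves remainder $f\neq 0$. By Rosenfeld's lemma (stated after Theorem~\ref{theorem:characteristic-sets-exist}) this means $f\notin P$.

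Second, every $x_i^{(j)}$ with $j\geq r_i$ is algebraic over $K(T)$ in $\kappa(P)$. The argument is induction along the ranking. For $j=r_i$, the relation $g_i=0$ is a polynomial relation in $x_i^{(r_i)}$ with leading coefficient $I_{g_i}$. The coefficients involve only lower-ranked variables, which we may assume algebraic by induction. Since $I_{g_i}\in S$ and $P=\sat_S([g])$ is prime, we have $I_{g_i}\notin P$, so the relation is nontrivial. For $j>r_i$, the derivative $g_i^{(j-r_i)}$ is linear in $x_i^{(j)}$ with coefficient $s_{g_i}\notin P$, which expresses $x_i^{(j)}$ rationally in lower-ranked variables.

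Combining the two facts, $T$ is a transcendence basis of $\kappa(P)$, and Step 1 gives the claimed equality.

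Alternatively, the inequality $\leq$ can be obtained in the paper's linearization style. Localizing at $S$ gives $S^{-1}[g_1,\ldots,g_n]=S^{-1}P$, which is prime, so $V(P)$ is a generically reduced component of $\Spec R/[g_1,\ldots,g_n]$, and Theorem~\ref{T:main} applies. At the generic point $\eta$ the separants do not vanish, so by Proposition~\ref{proposition:linearizing-jacobi-numbers} the leader orders survive linearization and the Jacobi number does not drop. Equality, however, still needs the transcendence-basis count, so I would present the direct argument.

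The main obstacle I expect is the induction in the second half of Step 2 when the ranking is not orderly. In that case lower-ranked variables may include higher derivatives of other $x_k$, and one must check that the induction on the ranking is well-founded on the variables that actually occur, which are finitely many below a given one once $P$ has finite dimension. One must also check that the inverted initials and separants genuinely lie outside $P$. I would handle the latter using the fact that they are Ritt-reduced with respect to $g_1,\ldots,g_n$ and nonzero, so Rosenfeld's lemma applies to them as well.
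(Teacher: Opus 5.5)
Your proof is correct. Its first half is the paper's own argument: the paper also observes that autoreducedness and distinct leading variables make $J(g_1,\ldots,g_n)$ the Ritt bound, so the diagonal $\sum_i r_i$ dominates every column.

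The two proofs diverge on the equality $\trdeg_K\kappa(P)=\sum_i r_i$. The paper notes only that the separants are nonzero at the generic point, so that $[g_1,\ldots,g_n]$ ``has a good linearization'' at $P$. It then takes the equality of dimension and Jacobi bound from Cluzeau--Hubert (their Theorem~4.5) as a citation. You prove it directly by exhibiting the transcendence basis $T=\{x_i^{(j)}\colon j<r_i\}$. Independence follows because nonzero reduced elements lie outside $P$. Algebraicity follows from the triangular structure, with initials and separants not in $P$.

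Your route is self-contained and valid for an arbitrary ranking. It also makes explicit a point the paper's write-up leaves implicit. Applying Theorem~\ref{T:main} to the generically reduced component $V(P)$ of $\Spec R/[g_1,\ldots,g_n]$ gives only $\leq$; the reverse inequality is a genuine transcendence-degree count. The paper's version is shorter and fits its linearization theme, but the substantive step is delegated to the citation.

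One correction to your final paragraph. The induction is not made well-founded by finite dimensionality of $P$. It is also false that only finitely many variables lie below a given one: for an elimination ranking with $x_1\prec x_2$, every $x_1^{(m)}$ lies below $x_2$, whatever $P$ is. There are two easy fixes:
\begin{itemize}
\item invoke the standard fact that every ranking is a well-ordering; or
\item induct on the excess $e=j-r_i$, and for fixed $e$ on the finite chain $x_1^{(r_1+e)},\ldots,x_n^{(r_n+e)}$.
\end{itemize}
This chain is ordered exactly as the leaders, because $u\prec v$ implies $u'\prec v'$. Besides the term $s_{g_k}x_k^{(r_k+e)}$, the derivative $g_k^{(e)}$ involves only variables of excess less than $e$ and those $x_m^{(r_m+e)}$ with $\ell_{g_m}\prec\ell_{g_k}$.
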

\begin{proof}
	Let $S$ denote the multiplicative set generated by the initials and separants of the $g_i$. 
	By definition of characteristic set we have that 
	\begin{equation}\label{E:characteristic-set}
		P=\sat_{S}([g_1,\ldots,g_n])
	\end{equation} 
	where $\sat$ denotes the saturation with respect to the multiplicative set $S$. 
	Let $\ell_{i}$ denote the leader of $g_i$. 
	Since $S \cap P=\emptyset$ we have $\partial g_i/\partial \ell_i \notin P$ and since the leading variables of all of the elements of the characteristic set are unique $[g_1,\ldots,g_n]$ has a good linearization at $P$. 
	
	Note that $J(g_1,\ldots,g_n)$ in this case is just the Ritt bound, i.e.~the maximum of each column, since every element is reduced with respect to every other element and the leading variables are unique. 
	
	Also the dimension is precisely the Jacobi Bound in this case \cite[Theorem 4.5 and discussion below]{Cluzeau2003}.
\end{proof}

Corollary~\ref{C:characteristic-sets} implies that for any given explicit set of differential equations $u_1,\ldots,u_n \in K\lbrace x_1,\ldots,x_n\rbrace$ one can effectively determine whether Jacobi Bound Conjecture holds via a computer algorithm.
\begin{corollary}\label{C:decidability}
	The problem of determining if the Jacobi Bound Conjecture holds for a given set of differential equations $(u_1,\ldots,u_n)$ in $K\lbrace x_1,\ldots,x_n\rbrace$ is decidable. 
\end{corollary}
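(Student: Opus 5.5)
We give an explicit decision procedure whose correctness rests on Corollary~\ref{C:characteristic-sets} together with the standard effective decomposition of radical differential ideals. Given $u_1,\ldots,u_n \in K\lbrace x_1,\ldots,x_n\rbrace$ (with $K$ a computable differential field, e.g.\ $\mathbb{Q}(t)$), first compute $J(u_1,\ldots,u_n)$. This is a maximum over the finitely many permutations in $S_n$ of sums of orders, and each order is read off directly from the finitely many variables $x_j^{(r)}$ occurring in each $u_i$.

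Second, run the Rosenfeld--Gr\"obner algorithm on $[u_1,\ldots,u_n]$. It outputs finitely many regular (characteristic) systems $(A^{(k)}, S^{(k)})$ with
\[
\sqrt{[u_1,\ldots,u_n]} = \bigcap_k \sat_{S^{(k)}}([A^{(k)}]),
\]
and each saturation is a radical differential ideal. By Proposition~\ref{proposition:minimal-prime-principle} the minimal primes of $[u_1,\ldots,u_n]$ are the minimal primes of $\sqrt{[u_1,\ldots,u_n]}$, so the irreducible components of $\Sigma$ are among the components of the $\sat_{S^{(k)}}([A^{(k)}])$. Two effective facts are needed here, and I would cite them from \cite{Hubert2000}. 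First, each regular system can be refined into characteristic sets of its prime components; for instance, factor over the appropriate algebraic extension in the finite-type algebra $K[\ldots]^{(m)}$, as in the proof of Proposition~\ref{prop:finite-typeness}. Second, redundant components can be discarded, because membership in a characteristic-set prime is decidable by Rosenfeld's Lemma: $f\in P$ if and only if the Ritt remainder of $f$ by the characteristic set is zero. Inclusion $P_k \subseteq P_l$ is then decided by reducing the finitely many generators $A^{(k)}$ modulo $A^{(l)}$. One must also check that $P_k$ is the saturation of $[A^{(k)}]$; Theorem~\ref{theorem:characteristic-sets-exist} handles this.

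Third, for each surviving minimal prime $P$ with characteristic set $(g_1,\ldots,g_r)$, decide finite dimensionality. By the remark after Theorem~\ref{theorem:characteristic-sets-exist}, $\Krull(R/P)<\infty$ exactly when $r=n$ with distinct leading variables, and this is a syntactic check. In that case Corollary~\ref{C:characteristic-sets} gives $\Krull(R/P)=J(g_1,\ldots,g_n)$, which is computable. JBC holds for $(u_1,\ldots,u_n)$ if and only if $J(g_1,\ldots,g_n)\le J(u_1,\ldots,u_n)$ for every finite-dimensional minimal prime. Since all quantities involved are finite computations, the answer is decided after finitely many steps.

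The main obstacle is the second step. Rosenfeld--Gr\"obner yields an irredundant decomposition into regular components, but not necessarily into prime ones, and removing redundancy between components is the subtle part (Ritt's problem in general). I would handle this by reducing to the prime case: factor within the finite-type localizations, then test inclusion of primes via Rosenfeld's Lemma. Only inclusions among candidate primes given by characteristic sets need to be decided, and that case is effective. Alternatively, it suffices to take the maximum of $\Krull$ over all finite-dimensional candidate components. Including redundant components could only make the test stricter, so if needed I would note that the conjecture's statement concerns components and keep the inclusion test.
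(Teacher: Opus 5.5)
\textbf{Gap: the inclusion test in your second step.} Your overall route is the paper's: compute $J(u_1,\ldots,u_n)$, run Rosenfeld--Gr\"obner, apply Corollary~\ref{C:characteristic-sets} to each finite-dimensional output, and compare. The paper simply treats the Rosenfeld--Gr\"obner output as the prime decomposition. You try to be more careful about redundancy, and that is where the error enters.

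Rosenfeld's Lemma lets you decide whether the generators $A^{(k)}$ lie in $P_l$, i.e.\ whether $[A^{(k)}]\subseteq P_l$. But $P_k=\sat_{S_k}([A^{(k)}])$, and from $[A^{(k)}]\subseteq P_l$ you can only conclude $P_k\subseteq P_l$ when no initial or separant of $A^{(k)}$ lies in $P_l$. When one does, which is exactly the singular-component situation, your test says nothing. Here is an example:
\begin{itemize}
\item Both $y'^2-4y$ and $y'^2-4y^3$ reduce to $0$ modulo $y$.
\item For $y'^2-4y$, differentiating gives $2y'(y''-2)$. So $y''-2$ lies in the general component but not in $[y]$, and $[y]$ is an essential component.
\item For $y'^2-4y^3$, Ritt's low power theorem says $[y]$ contains the general component.
\end{itemize}
Your test would discard $[y]$ in both cases. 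Deciding inclusion of primes given by characteristic sets is the (generalized) Ritt problem, which is open. So ``that case is effective'' is precisely what is not known. Also, Rosenfeld--Gr\"obner is not irredundant: on $y'^2-4y^3$ it outputs the redundant branch $\{y\}$.

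\textbf{The fallback is only half justified.} Keeping all candidates is effectively the paper's route. But ``could only make the test stricter'' concedes possible false reports of failure, so it is not yet a decision procedure.

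Half of the justification is easy. If $Q\subsetneq P$ are primes with $\trdeg_K(R/Q)<\infty$, then $\trdeg_K(R/P)<\trdeg_K(R/Q)$: lift a transcendence basis of $R/P$ and look at the constant term of a minimal equation of some $f\in P\setminus Q$. Hence a redundant finite-dimensional candidate lying over a finite-dimensional minimal prime cannot produce a spurious violation.

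What remains is a finite-dimensional candidate $P$ all of whose minimal primes below it are infinite-dimensional. JBC says nothing about those, so $P$ could exceed $J(u_1,\ldots,u_n)$ without contradicting the conjecture. You would need to rule this case out or detect it. Your argument does not, and the paper's short proof passes over it as well.

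\textbf{A simplification.} Your factoring step is unnecessary. For a regular chain $A$, all minimal primes of its saturation share the leaders of $A$ (Lazard's lemma). So each has dimension equal to the sum of the orders of those leaders, which is $J(A)$.
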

\begin{proof}
	Given $[u_1,\ldots,u_n]$ we can run the Rosenfeld--Gr\"{o}bner algorithm to extract a prime decomposition of $\sqrt{[u_1,\ldots,u_n]}$.
	The output of the Rosenfeld--Gr\"{o}bner algorithm is a set of characteristic sets associated to $\sqrt{[u_1,\ldots,u_n]}$ --- each one of these defining a prime differential ideal as in \eqref{E:characteristic-set}. 
	Since for each characteristic set $(g_1,\ldots,g_n)$ associated to a prime ideal $P$ we have by Corollary~\ref{C:characteristic-sets} exactly $\trdeg^{\partial}_K(R/P)=J(g_1,\ldots,g_n)$ for $P$ finite dimensional (those characteristic sets which do not have finite Krull dimension will have length less than $n$) we can then compare $J(u_1,\ldots,u_n)$ explicitly to the finite number of $J(g_1,\ldots,g_n)$ to determine if the Jacobi Bound Conjecture holds in this example. 
\end{proof}

\providecommand{\bysame}{\leavevmode\hbox to3em{\hrulefill}\thinspace}
\providecommand{\MR}{\relax\ifhmode\unskip\space\fi MR }
\providecommand{\MRhref}[2]{%
  \href{http://www.ams.org/mathscinet-getitem?mr=#1}{#2}
}
\providecommand{\href}[2]{#2}

\end{document}